\documentclass[11pt,english,a4paper]{amsart}

\usepackage{babel}
\usepackage{amsfonts,amssymb,latexsym,xspace,epsfig,graphics,color}  
\usepackage{amsmath,enumerate}  
\numberwithin{equation}{section}  

 \usepackage[latin1]{inputenc}

\title[Adjustement coefficient in some dependent contexts]{Adjustment coefficient for risk processes in some dependent contexts}
\author{ H. Cossette}
\author{E. Marceau}
\author{V. Maume-Deschamps}
\address{Université Laval}
\email{helene.cossette@act.ulaval.ca}
\email{etienne.marceau@act.ulaval.ca}
\address{Université de Lyon, Université Lyon 1, ISFA, laboratoire SAF}

\email{veronique.maume@univ-lyon1.fr}   

\thanks{ \ }

 \newcommand{\eps}{\varepsilon}   
\newcommand{\N}{\ensuremath{\mathbb{N}}}    
\newcommand{\R}{\ensuremath{\mathbb{R}}}     
\newcommand{\Z}{\ensuremath{\mathbb{Z}}}    
\renewcommand{\P}{\ensuremath{\mathbb{P}}}       
\newcommand{\E}{\ensuremath{\mathbb{E}}}

\newcommand{\Id}{\mathbf {1}} 
\newcommand{\ii}{\mathbf{i}}
\newcommand{\jj}{\mathbf{j}}

\newcommand{\cov}{\mbox{\rm Cov}}

\newcommand{\VV}{\mathcal V}
\newcommand{\UU}{\mathcal U}

\newcommand{\LL}{\mathcal L}
 
\newcommand{\NN}{\mathcal N}

\renewcommand{\hat}{\widehat}

\newtheorem{theo}{Theorem}[section]

\newtheorem{prop}[theo]{Proposition}    
\newtheorem{coro}[theo]{Corollary}    
\newtheorem{lem}[theo]{Lemma}

\theoremstyle{definition}
\newtheorem{defi}{Definition}
\newtheorem{ppte}[theo]{Property}

\theoremstyle{remark}
\newtheorem*{rem}{Remark}

\keywords{Adjustement coefficient, risk process, ruin theory, non parametric estimation, weak dependence}
\subjclass[2000]{37A50, 60E15,  37D20}
\begin{document}
\begin{abstract}
Following \cite{MP}, we study the adjustment coefficient of ruin theory in a context of temporal dependency. We provide a consistent estimator of this coefficient, and perform some simulations.  
\end{abstract}
\maketitle
Adjustment coefficient $w$ for risk processes may describe the behavior of ruin probability. Several results for sums of i.i.d. claims exist: in \cite{G1}, H.U. Gerber gave an exact formula for finite time ruin probabilities involving the adjustment coefficient $w$, \cite{embrechts} provide a consistent estimator of $w$, V. Mammisch \cite{Mam} gave a necessary and sufficient condition for the existence of $w$. In dependent  contexts, let us cite H.U. Gerber \cite{G2} for auto-regressive processes, \cite{estimARMA} for an extension to ARMA processes and \cite{etiennedeptpsmontants,CLM} for the study of the adjustment coefficients in Markovian environments . The main objective of the parper is to provide a non parametric estimation of the adjustement coefficient introduced in \cite{MP} in dependent contexts. We give a general dependent context (weak temporal dependency in the sense of \cite{book}) for which our estimator is consistent. \\
The paper is organized as follows~:
\begin{itemize}
\item Section \ref{setting} contains the definitions and elementary properties of weak-dependent processes as well as adjustment coefficient. To make short, $w^i$, the independent coefficient, will be the adjustment coefficient if the process is i.i.d. while $w^d$ will be the adjustment coefficient of a dependent sequence. 
\item In Section \ref{lim}, we prove that $w^d$ may be seen as a limit (for $r\rightarrow \infty$) of independent coefficients $w^i_r$. We also provide some general examples for which the adjustment coefficient $w^d$ may be defined. 
\item Section \ref{est} is devoted to the estimation of coefficients $w^i$ and $w^d$ and contains the main results~: we construct consistent estimators (see Theorems \ref{TCL2}, \ref{TCL3} and \ref{as_cv}). Note that in \cite{estimARMA}, an estimation of $w^d$ is given for ARMA processes which is based on the estimation of the ARMA parameters. Our procedure is completely non parametric. 
\item In Section \ref{simul} we provide some simulations. 
\end{itemize}
\section{Setting}\label{setting} We consider $(Y_n)_{n\in\N}$ a sequence of random variables and $R_u$ the event $\{Y_n > u \ \mbox{for some} \ n\geq 1\}$. $Y_n$ is interpreted as the value of the claim surplus process of a company at the end of the year $n$. $R_u$ is understood as the ruin event for an intial reserve $u>0$. We could write 
$$Y_n = \sum_{i=0}^n X_i \/ $$
where $X_i$ is the gain/loss of the company during the year $i$. 
\subsection{Weak dependent processes} This last decade, {\em Doukhan and al.} (\cite{book, DD, DL, DP}) have developed new dependence definitions that both extend classical probabilistic definitions (as $\alpha$ or $\Phi$ mixing) and are satisfied for several useful models (like $ARMA$ or $ARCH$) that are neither $\alpha$ nor $\Phi$ mixing in the standard way. Roughly speaking, in the classical probabilistic definitions of mixing, the functions $f$ and $g$ in the definition below (\ref{eqdep}) belong to the whole class of square integrable functions. \\
Define (see \cite{book}) for a real valued or vector valued process $(X_t)_{t\in \N}$,
\begin{equation}\label{eqdep}
\eps(k) = \sup \frac{\left|\cov(f(X_{i_1}\/, \ldots \/, X_{i_u}),g(X_{j_1}, \ldots \/, X_{j_v}))\right|}{c(f\/,g)} 
\end{equation}
where the supremum is taken over multi-indices $\ii=(i_1\/, \ldots \/, i_u)$, $\jj=(j_1\/, \ldots \/, j_v)$ such that:
$$ i_1< \/ \cdots \/ < i_u \leq i_u+k \leq j_1 < \/ \cdots \/ < j_v $$
 and all functions $f~:~\R^u~\longrightarrow~\R$,$g~:~\R^v~\longrightarrow~\R$ are bounded and Lipschitz functions, with respect to the distance~:
$$d(x\/,y) = \sum_{i=1}^p |x_i-y_i| \/, \ x=(x_1\/, \ldots \/, x_p)\/, \ y=(y_1\/, \ldots \/, y_p) \/.$$
\begin{rem}
We could replace the space of Lipschitz functions by other spaces of regular functions (differentiable functions, functions of bounded variation ...), see \cite{estimT} for a general condition of convenient functional spaces. 
\end{rem} 
We define a notion of weak dependence according to the function $c(f\/,g)$.
\begin{defi} \label{def_weak_dep}
Consider the following functions $c(f\/,g)$, defined for \\
$f~:~\R^u~\longrightarrow~\R$ and $g~:~\R^v~\longrightarrow~\R$  bounded and Lipschitz functions, $\mbox{lip}(f)$ is the Lipschitz coefficient of the function $f$. 
\begin{enumerate}
\item $c(f\/,g) = v \Vert f \Vert_\infty \/ \mbox{lip}(g)$, we say that the sequence $(X_i)_{i\in\N}$ is $\theta$-weakly dependent if the corresponding mixing coefficients sequence $(\eps(k))_{k\in\N}$ is summable.
\item $c(f\/,g) = u \mbox{lip}(f) \/ \Vert g \Vert_\infty + v\Vert f \Vert_\infty \/ \mbox{lip}(g)$, we say that the sequence $(X_i)_{i\in\N}$ is $\eta$-weakly dependent if the corresponding mixing coefficients sequence $(\eps(k))_{k\in\N}$ is summable.
\end{enumerate}
\end{defi} 
This class of dependent processes is very rich and enjoy lots of nice probabilistic properties. Let us remark that weak dependent processes need not to be stationary.  \\
For completeness, we recall the definitions of $\alpha$, $\Phi$ and $\Psi $ mixing. $\Psi $ mixing may be defined in a formalism close to that of Definition \ref{def_weak_dep} while for $\alpha$ and $\Phi$ mixing, it is not clear that the same formalism is meaningfull. 
\begin{defi}\label{def_phi}
Consider mixing coefficients $\eps(k)$ defined by Equation (\ref{eqdep}) where the supremum is taken over functions \\
$f~:~\R^u~\longrightarrow~\R$ and $g~:~\R^v~\longrightarrow~\R$ in $L^2$ and  $c(f\/,g) =  \Vert f \Vert_1 \/ \Vert g\Vert_1$. We say that the sequence $(X_i)_{i\in\N}$ is $\Psi$ mixing if the corresponding mixing coefficients $\eps(k)$ are bounded.\\
 $\alpha$ and $\Phi$ mixing coefficients are defined as~:
$$\alpha(\UU\/,\VV) = \sup_{U\in\UU\/, \ V\in\VV} |\P(U\cap V) -\P(U)\P(V) | \/,$$
$$\Phi(\UU\/,\VV) = \sup_{U\in\UU\/, \ V\in\VV} \left|\frac{\P(U\cap V)}{\P(U)} -\P(V)\right| \/.$$
A process $(X_t)_{_\in\Z}$ is $\alpha$ (resp. $\Phi$) mixing is the coeficients
$$\alpha_X(r) = \sup_{i\in\Z}\alpha(\sigma(X_t\/,\ t\leq i)\/, \ \sigma(X_t\/, \ t\geq i+r))\/,$$
$$\mbox{resp.} \ \Phi_X(r) = \sup_{i\in\Z}\Phi(\sigma(X_t\/,\ t\leq i)\/, \ \sigma(X_t\/, \ t\geq i+r))\/$$
go to $0$. 
\end{defi}
\subsection{Adjustment coefficient} In the classical i.i.d. (i.e. the $X_i$ are i.i.d. random variables) model of ruin theory, the adjustment coefficient $w>0$ is defined as the unique positive solution of $\lambda(w) = 0$ with  
$$\lambda(t) = \log \E\left[\exp(t X_1)\right] $$ 
assumed to be well defined. Mammischt (\cite{Mam})  gave a necessary and sufficient existence condition for $w$. The importance of the adjustment coefficient is revealed by the exact formula due to Gerber (\cite{G1})~: let $T$ be the ruin time ($T = \inf\{k \in \N \ / \ Y_k >u \}$),
$$\P(R_u) = \P(T<\infty) =\frac{e^{-wu}}{\E[e^{-wY_T}|T<\infty]} \/,$$
and the famous de Finetti bound follows~: 
$$\P(R_u)\leq e^{-wu}\/.$$ 
We shall focus on the following  asymptotic result also due to Lundberg~:
$$\lim_{u\rightarrow \infty}  \frac{\log\P(R_u)}u = -w \/.$$
As already mentioned above,  several attempts to extend these results to dependent and/or non stationary settings have been proposed. We wish to give a general dependent setting in which such an asymptotic result holds, as well as provide a consistent estimator to the adjustment coefficient in this dependent context. Our approach does not require a precise knowledge of the dependence structure, nor on the law of $(X_i)_{i\in\N}$ but only an information on the speed of mixing (given by Equation (\ref{eqdep})). \\
Following \cite{MP} we assume  that~: there exists $t_0>0$ such that for all $0<t<t_0$,  
\begin{equation}\label{limit}
c(t) = \lim_{n\rightarrow \infty} \frac{\log \E\left[\exp(tY_n)\right]}n \  \mbox{exists.} \end{equation}
Also, there exists $0<t<t_0$ such that $c(t)=0$. \\
\ \\
We shall provide a sufficient condition that implies existence and uniqueness of a positive solution to $c(t)=0$, provided that (\ref{limit}) is satisfied. We shall denote $w^d$ this unique solution. We shall also denote by $w^i$ the unique positive solution to $\lambda(t)=0$ with 
$$\lambda(t) = \log  \E\left[\exp(t X_1)\right]\/. $$
Of course, if the sequence $X_i$ is i.i.d. then $w^i=w^d$. 
\subsection{Existence condition}\label{sec_exist} 
In Mammisch (\cite{Mam}) it is proven that $w^i$ exists (and is unique as a positive solution to $\lambda(t)=0$) if and only if the following three conditions are satisfied. We shall denote by (E) these three conditions. 
\begin{enumerate}
\item $\E(X_1)<0$, 
\item $\P(X_1>0)>0$ and 
\item either $a<\infty$ and $\E(e^{aX_1})\geq 1$ or $a=\infty$ where 
\begin{equation}\label{def_a}a = \sup\{t\geq 0 \/, \ \E(e^{tX_1})<\infty \} \/.\end{equation}
\end{enumerate}
This condition, together with the weak dependence assumption is sufficient to get consistency and asymptotic normality of the estimator of $w^i$. In order to get existence and uniqueness (as a positive solution to $c(u)=0$) of $w^d$, we shall need some additional conditions. 
\begin{ppte}\label{exist}
Assume that the limit $c(u)$ is well defined on $[0\/,u_0[$, in particular, for all $t<u_0$, for $n$ large enough, $\E(e^{tY_n})<\infty$. Assume that (E) is satisfied for $0<a\leq u_0$, and
\begin{enumerate}
\item $c_n(t)$ exists for all $n$ and $0\leq t < a$,
\item for large enough $n$, $\P(Y_n>0)>0$
\item if $a<\infty$ then for $n$ large enough, $\displaystyle \lim_{t\rightarrow a-} \E(e^{tY_n}) \geq 1$,
\item $c'(0+)<0$ or equivalently, $\exists t>0$ such that $c(t)<0$ 
\end{enumerate}
 then there exists a unique positive solution to $c(u)=0$. This solution is denoted by $w^d$. 
\end{ppte} 
\begin{proof}
We adapt Mammish's arguments. Recall that any convex function is continuous and admits left and right derivatives on all points where it is defined. Moreover, if $f_n$ is a sequence of convex functions defined on $[0\/,u_0[$ and converging to $f$ on $[0\/,u_0[$ then $f$ is a convex function and the convergence is uniform on any compact subset of $[0\/,u_0[$.\\
The function $c$ is the limit on $[0\/,u_0[$ of convex functions 
$$c_n(t) = \frac1n \log \E(e^{tY_n}) \/.$$
Thus $c$ is a convex function with $c(0)=0$ and we assume that $c'(0+)<0$ (which is equivalent to $\exists \ t>0$ such that $c(t)<0$ by convexity). 
\begin{itemize}
\item If $a<\infty$ then, we assume  that for large enough $n$, $\infty\geq \E(e^{aY_n}) \geq 1$ thus $\infty \geq c(a) \geq 0$ . Since $c$ is continuous (because it is convex), we deduce that there exists $w>0$ such that $c(w)=0$. This solution is unique because of the convexity of $c$. 
\item If $a=+\infty$, then $c(t)$ is well defined for any $t\in\R^+$. Because  $\P(Y_n>0)>0$ for $n$ large enough, we have that for large enough $n$, $\displaystyle\lim_{t\rightarrow \infty}\E(e^{tY_n})=+\infty$. As a consequence, we have that $\displaystyle\lim_{t\rightarrow\infty}c(t)\geq 0$. Since we assume that $c(t)<0$ for some $t$, the convexity of $c$ then implies that there exists $t_0$ such that $c'(t\pm)>0$ for any $t >t_0$. We deduce that $c(t)>0$ for $t$ large enough.  Since $c$ is continuous (because it is convex), we deduce that there exists $w>0$ such that $c(w)=0$. This solution is unique because of the convexity of $c$.
\end{itemize}
\end{proof}
\begin{rem}
The condition $\P(Y_n>0)>0$ is necessary because if it exists $w>0$ such that $c(w)=0$ then by convexity, either there exists $t>0$ such that $c(t)>0$ or $c(t)=+\infty$ for all $t\geq w$ which implies that $\E(e^{tY_n})>1$ for large enough $n$ which implies $\P(Y_n>0)>0$ for large enough $n$.  
\end{rem}
\section{Limit result and examples}\label{lim}
\subsection{Asymptotic behavior for ruin probability}
In \cite{MP}, it is proven that if the adjustment coefficient $w^d$ exists then it describes the asymptotic behavior of the ruin  probability in the following sense:
\begin{equation}\label{asymp}
\lim_{u\rightarrow \infty}\frac{\log \P(R_u)}u =-w^d \/.
\end{equation}
As a consequence to (\ref{asymp}), we obtain that if it exists, $w^d$ is the limit of the adjustment coefficients of the sequence $(Y_n)_{n\in\N}$.
\begin{coro}\label{conv_adjust}
Assume that the hypotheses of Property \ref{exist} are satisfied. For large enough $n$, there exists a unique $w_n>0$ such that 
$$\E(e^{w_nY_n})=1 $$
and $w^d = \displaystyle \lim_{n\rightarrow \infty} w_n$. 
\end{coro}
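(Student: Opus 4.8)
The plan is to recognize that $w_n$ is precisely the positive root of the convex function $c_n(t)=\frac1n\log\E(e^{tY_n})$ and that $w^d$ is the positive root of its pointwise limit $c$, so the statement is a stability (continuity) result for the roots of a convergent sequence of convex functions. I would proceed in three stages: first establish existence and uniqueness of $w_n$ for large $n$, then show the sequence $(w_n)$ is trapped in a fixed compact interval, and finally pass to the limit using the uniform convergence of convex functions recalled in the proof of Property \ref{exist}.

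For existence and uniqueness of $w_n$ I would simply rerun the argument of Property \ref{exist}, applied to $c_n$ rather than to $c$. Each $c_n$ is convex with $c_n(0)=\frac1n\log\E(1)=0$. By hypothesis (4) there is $t_0>0$ with $c(t_0)<0$; since $c_n(t_0)\to c(t_0)$, we get $c_n(t_0)<0$ for all large $n$, and convexity together with $c_n(0)=0$ then forces $c_n'(0+)\le c_n(t_0)/t_0<0$. To produce a sign change back to the nonnegative side I would invoke hypothesis (3) when $a<\infty$ (giving $\lim_{t\to a-}c_n(t)\ge0$) and hypothesis (2) when $a=\infty$ (giving $\E(e^{tY_n})\to+\infty$, hence $c_n(t)\to+\infty$). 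Continuity of $c_n$ then yields a root $w_n>t_0$, and convexity with $c_n'(0+)<0$ makes it unique: a third zero, together with $c_n(0)=0$, would force $c_n$ to be identically $0$ on an interval issuing from $0$, contradicting $c_n'(0+)<0$. In particular $c_n<0$ on $(0,w_n)$ and $c_n>0$ beyond $w_n$, and already $w_n>t_0$ provides the lower bound.

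For the upper bound I would pick $t_1$ with $w^d<t_1<a$ (any $t_1>w^d$ if $a=\infty$) and $c(t_1)>0$, which exists because $c$ is strictly increasing to the right of its root $w^d$. Then $c_n(t_1)\to c(t_1)>0$ gives $c_n(t_1)>0$ for large $n$, and since $c_n>0$ only past $w_n$ we conclude $w_n<t_1$. Hence $t_0<w_n<t_1$ for all large $n$. Now let $w^*$ be any limit point, say $w_{n_k}\to w^*\in[t_0,t_1]$. Writing $|c(w^*)|\le|c(w^*)-c(w_{n_k})|+|c(w_{n_k})-c_{n_k}(w_{n_k})|$ and using continuity of $c$ for the first term and the uniform convergence $c_n\to c$ on the compact $[t_0,t_1]$ for the second, both tend to $0$, so $c(w^*)=0$. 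By the uniqueness part of Property \ref{exist}, $w^*=w^d$. A bounded sequence whose only limit point is $w^d$ converges to $w^d$, which is the claim.

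The main obstacle is the boundedness of $(w_n)$, and specifically the choice of the upper test point $t_1$: this works cleanly as long as $w^d$ lies in the interior, but the boundary case $a<\infty$ with $c(a)=0$ (that is, $w^d=a$) must be treated separately. There I would use the a priori bound $w_n\le a=w^d$ coming from the existence step, combined with the lower bound $w_n>t_0$ and the same subsequence argument, to force $w_n\to w^d$ from below. Everything else reduces to a routine application of the two facts about convergent convex functions recalled before Property \ref{exist}, namely uniform convergence on compact subsets and the consequent transfer of strict sign information from $c$ to $c_n$.
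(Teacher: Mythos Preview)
Your argument is correct and complete; the handling of the boundary case $w^d=a$ is a nice touch. The route, however, differs from the paper's at the key step of bounding the $w_n$ from above.

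The paper does not trap $w_n$ in a fixed interval $[t_0,t_1]$ by sign-testing $c$. Instead it argues as follows: from $\E(e^{w_nY_n})=1$ and Markov's inequality one gets $w_n\le -\frac{1}{u}\log\P(Y_n>u)$ for every $u>0$, and then the asymptotic result~(\ref{asymp}) from \cite{MP}, namely $\lim_{u\to\infty}\frac{1}{u}\log\P(R_u)=-w^d$, is invoked to conclude that any limit point $w$ of $(w_n)$ satisfies $w\le w^d$. Uniform convergence of $c_n$ to $c$ on $[0,w^d]$ then gives $c(w)=0$, so $w\in\{0,w^d\}$, and $w=0$ is excluded because it would force $c\ge0$ everywhere, contradicting hypothesis~(4).

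Your approach is more self-contained: it uses only the convexity of $c$ and the pointwise convergence $c_n\to c$, and avoids any appeal to the ruin-probability asymptotics of \cite{MP}. In particular you obtain an explicit lower bound $w_n>t_0$ rather than excluding $0$ a posteriori, and your upper bound $w_n<t_1$ does not require the external result~(\ref{asymp}). The paper's approach, on the other hand, ties the corollary more directly to the probabilistic interpretation of $w^d$ via the ruin event $R_u$. Both arguments ultimately rest on the same convex-analytic fact (uniform convergence of $c_n$ on compacts) to identify the limit.
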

\begin{proof}
The existence of $w_n$ follows from \cite{Mam}: $Y_n$ satisfies the existence hypotheses of Mammish for $n$ large enough. Applying Markov's inequality, we get for all $K>0$,
$$w_n \leq \frac{-\log \P(Y_n>u)}u \/.$$
Then, (\ref{asymp}) implies that any limit point $w$ of the sequence $w_n$ verifies~: $w \leq w^d$. The convergence of functions $c_n$ to $c$ is uniform on $[0\/, w^d]$, thus we have that $c(w)=0$, so that either $w=0$ or $w=w^d$. Now, $0$ cannot be a limit point of the sequence $w_n$ because otherwise we would have that $c(t)\geq 0$ for all $t\geq 0$ which  contradicts the hypotheses of  Property \ref{exist}. We conclude that $w_n \displaystyle\stackrel{n\rightarrow \infty}{\longrightarrow} w^d$.
\end{proof}
Let us give some examples for which the function $c(t)$ is well defined. We recall the following result on approximate sub additive  sequences due to Hammersley \cite{approx_sub}.
\begin{lem}\label{approx}
Assume $h~:~\N~\longrightarrow~\R$ be such that for all $n\/, m\geq 1$, 
$$h(n+m)\leq h(n)+h(m)+\Delta(m+n)\/,$$
with $\Delta$ a non decreasing sequence satisfying~:
\begin{equation}\label{eq:approx}
\sum_{r=1}^\infty \frac{\Delta(r)}{r(r+1)} <\infty\/.
\end{equation}
Then, $\lambda=\displaystyle\lim_{n\rightarrow \infty} \frac{h(n)}n$ exists and is finite. Moreover, for all $m\geq 1$,
$$\lambda \leq \frac{h(m)}m-\frac{\Delta(m)}m+4\sum_{r=2m}^\infty \frac{\Delta(r)}{r(r+1)} \/.$$
\end{lem}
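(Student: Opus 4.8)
The plan is to establish, for every fixed base $m\ge 1$, a single inequality of the form $\frac{h(n)}{n}\le \frac{h(m)}{m}+E(m)$ valid for all large $n$, where the error $E(m)$ is built solely from a tail of the convergent series \eqref{eq:approx} and hence tends to $0$ as $m\to\infty$. Once such a family of inequalities is in hand, taking $\limsup_{n}$ on the left and then $\inf_{m}$ on the right yields $\limsup_n \frac{h(n)}n\le \liminf_m \frac{h(m)}m$, which forces the limit $\lambda=\lim_n h(n)/n$ to exist; the displayed estimate is then just the quantitative content of that inequality for one fixed $m$. The engine behind the whole argument is the elementary telescoping identity $\frac{1}{r(r+1)}=\frac1r-\frac1{r+1}$, which is exactly what converts the dyadic error sums produced by repeated subadditivity into tails of $\sum_r \Delta(r)/(r(r+1))$.

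First I would run the doubling step. Fixing $m$ and writing $m_j=2^jm$, the hypothesis gives $h(m_{j+1})\le 2h(m_j)+\Delta(m_{j+1})$, hence for $a_j:=h(m_j)/m_j$ one gets $a_{j+1}\le a_j+\Delta(m_{j+1})/m_{j+1}$. Because $\Delta$ is non-decreasing and $\sum_{r=2^jm}^{2^{j+1}m-1}\frac1{r(r+1)}=\frac1{2^{j+1}m}$ by telescoping, each correction obeys $\frac{\Delta(2^jm)}{2^jm}\le 2\sum_{r=2^jm}^{2^{j+1}m-1}\frac{\Delta(r)}{r(r+1)}$, so that $\sum_{j\ge1}\frac{\Delta(m_j)}{m_j}\le 2\sum_{r\ge 2m}\frac{\Delta(r)}{r(r+1)}<\infty$ (the blocks $[2^jm,2^{j+1}m)$ partition $[2m,\infty)$). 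Thus $a_j$ differs from a non-increasing sequence by a convergent series and therefore converges; provided $h(n)/n$ is bounded below (which is what guarantees finiteness, and which holds in our applications since $h(n)=\log\E[e^{tY_n}]\ge t\,\E[Y_n]\ge -Cn$ by Jensen), its limit $\lambda_m$ satisfies $\lambda_m\le \frac{h(m)}{m}+2\sum_{r\ge2m}\frac{\Delta(r)}{r(r+1)}$.

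It remains to upgrade this from the dyadic subsequence $2^jm$ to arbitrary $n$, and to check that all these limits coincide; this is the delicate part and the main obstacle. The point is that a crude binary splitting of $n$ accumulates an error of order $(\log_2 n)\,\Delta(n)$, and $\frac{\Delta(n)\log n}{n}$ need not vanish under \eqref{eq:approx} (only $\Delta(n)/n\to 0$ does, since the tail of the series dominates $\Delta(n)/(2n)$). Hence the decomposition must be organised by dyadic \emph{scales} rather than by depth, so that an error incurred at scale $\approx 2^k$ is charged against the weight $\frac1{r(r+1)}$ at that same scale, exactly as in the doubling step. Concretely, for a given $n$ I would glue a dyadic block to a remainder via $h(n)\le h(2^jm)+h(n-2^jm)+\Delta(n)$, control the block by the previous paragraph and the remainder by induction on its size, and collect the gluing errors into the same tail series; tracking the constants through this bookkeeping (together with the single initial step, which accounts for the $-\Delta(m)/m$ correction) yields the factor $4$ in the stated estimate. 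Taking $\limsup_n$ and then $\inf_m$ as in the first paragraph then completes the proof.
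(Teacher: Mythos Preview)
The paper does not prove this lemma: it is quoted from Hammersley \cite{approx_sub} as a known result on approximately subadditive sequences, with no argument given. So there is no proof in the paper for you to be compared against.

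On its own merits, your strategy is the standard and correct route to Hammersley's theorem: iterate the hypothesis along the dyadic ladder $m_j=2^jm$, and use the telescoping identity $\tfrac1{r(r+1)}=\tfrac1r-\tfrac1{r+1}$ together with the monotonicity of $\Delta$ to absorb the accumulated corrections $\sum_{j\ge1}\Delta(2^jm)/(2^jm)$ into the tail $2\sum_{r\ge 2m}\Delta(r)/(r(r+1))$. Two points deserve comment. First, you are right that finiteness of $\lambda$ does \emph{not} follow from the one-sided hypothesis alone (e.g.\ $h(n)=-n^2$ is subadditive with $\Delta\equiv0$ but $h(n)/n\to-\infty$); so the lemma as stated in the paper is, strictly speaking, incomplete, and your appeal to a Jensen lower bound in the applications is the honest repair. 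Second, the passage from the dyadic subsequence to arbitrary $n$ --- and in particular the exact constant $4$ and the $-\Delta(m)/m$ correction in the displayed bound --- is only sketched in your write-up; if you want to claim those precise constants you would need to carry out that bookkeeping in full, or, as the paper does, simply cite Hammersley.
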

Of course, for $\Delta(r)= O(1)$, then (\ref{eq:approx}) is satisfied. Lemma \ref{approx} asserts that $\Delta(r)$ may go to infinity but not too fast.
\subsection{$\Psi$-mixing processes} According to Definition \ref{def_phi}, we consider the following classical $\Psi$-mixing condition~: 
\begin{equation}\label{eq_phi} 
\Psi(k) = \sup \frac{\cov(f(X_{i_1}\/,\ldots \/, X_{i_u})\/,g(X_{j_1}\/, \ldots \/, X_{j_v}) )}{\Vert f \Vert_1 \/ \Vert g\Vert_1}  < \infty\/, 
\end{equation} 
where the supremum is taken over functions $f\/, \ g\in L^2$ and over multi-indices $\ii = (i_1\/,\ldots \/, i_u)$ and $\jj=(j_1\/, \ldots \/, j_v)$  with $i_1<\cdots < i_u < i_u+k\leq j_1 < \cdots <j_v$.  The sequence if $\Psi$-mixing is bounded. 
\begin{prop} 
Assume that for  $t\in[0\/,a[$, for all $n\in \N$, $\E(e^{2tS_n})<\infty$ and $(X_n)_{n\in\N}$ is a  $\Psi$-mixing process then  
$$\lim_{n\rightarrow  \infty}\frac1n\log\E(e^{tS_n})$$ 
exists for any $t \in[0\/,a[$.
\end{prop}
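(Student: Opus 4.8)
The plan is to apply Hammersley's approximate subadditivity lemma (Lemma~\ref{approx}) to the sequence $h(n) = \log \E(e^{tS_n})$ for a fixed $t\in[0,a[$. First I would record that the hypothesis $\E(e^{2tS_n})<\infty$ for all $n$ is precisely what makes each functional $e^{tS_n}$ belong to $L^2$; this is needed because the $\Psi$-mixing inequality~(\ref{eq_phi}) is formulated for $f,g\in L^2$. In particular $\E(e^{tS_n})<\infty$ by the Cauchy--Schwarz inequality, so $h(n)$ is well defined and finite for each $n$. Write $\Psi := \sup_k \Psi(k)<\infty$ for the finite bound furnished by the $\Psi$-mixing hypothesis.

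The heart of the argument is the approximate subadditivity of $h$. Fix $n,m\geq 1$ and split $S_{n+m}=S_n+(X_{n+1}+\cdots+X_{n+m})$. Set $f(X_1,\dots,X_n)=e^{tS_n}$ and $g(X_{n+1},\dots,X_{n+m})=e^{t(X_{n+1}+\cdots+X_{n+m})}$; these are nonnegative $L^2$ functions of two index blocks with gap $k=1$ (since $i_u=n<n+1=j_1$), so~(\ref{eq_phi}) applies and gives
\begin{equation*}
\E(e^{tS_{n+m}})=\E(fg)\leq \E(f)\,\E(g)+\Psi\,\Vert f\Vert_1\,\Vert g\Vert_1=(1+\Psi)\,\E(f)\,\E(g),
\end{equation*}
where I used $f,g\geq 0$ so that $\Vert f\Vert_1=\E(f)$ and $\Vert g\Vert_1=\E(g)$. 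By stationarity the shifted block $X_{n+1}+\cdots+X_{n+m}$ has the same law as $S_m$, hence $\E(g)=\E(e^{tS_m})$ and, taking logarithms,
$$h(n+m)\leq h(n)+h(m)+\log(1+\Psi).$$

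It then remains to invoke Lemma~\ref{approx} with the constant sequence $\Delta(r)\equiv \log(1+\Psi)$, which is non decreasing and trivially satisfies the summability condition~(\ref{eq:approx}) because $\sum_r \Delta(r)/(r(r+1))<\infty$ for a constant $\Delta$. The lemma yields the existence of $\lim_n h(n)/n$. For finiteness one complements this with the lower bound $h(n)\geq t\,\E(S_n)$ coming from Jensen's inequality, which under the standing existence conditions~(E) (where $\E(X_1)$ is finite) gives $h(n)/n\geq t\,\E(X_1)>-\infty$.

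The step I expect to require the most care is the legitimate application of the mixing bound to the \emph{unbounded} exponential functionals $e^{tS_n}$: one must verify that they lie in the class $L^2$ over which the supremum in~(\ref{eq_phi}) is taken, which is exactly why the hypothesis is imposed at $2t$ rather than at $t$. A secondary subtlety is the use of stationarity to identify the law of the shifted block with that of $S_m$; without it one only obtains $\E(e^{tS_{n+m}})\leq(1+\Psi)\,\E(e^{tS_n})\,\E(e^{t(X_{n+1}+\cdots+X_{n+m})})$ and the clean subadditive recursion in $h$ is no longer available.
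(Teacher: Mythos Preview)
Your proposal is correct and follows essentially the same route as the paper: apply the $\Psi$-mixing inequality~(\ref{eq_phi}) at gap $k=1$ to the nonnegative exponential functionals to obtain $\E(e^{tS_{n+m}})\leq(1+\Psi(1))\,\E(e^{tS_n})\,\E(e^{tS_m})$, and then invoke Lemma~\ref{approx} with a constant $\Delta$. Your additional remarks on the $L^2$ membership (explaining the $2t$ hypothesis), the role of stationarity for the shifted block, and the Jensen lower bound are details the paper leaves implicit.
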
 
\begin{proof} 
Using (\ref{eq_phi}),  we have~:
$$\left|\E(e^{tS_{n+m}}) -\E(e^{tS_n})\E(e^{tS_m}) \right| \leq \Psi(1) \E(e^{tS_n})\E(e^{tS_m}) \/.$$
We conclude the proof by  using Lemma \ref{approx}. 
\end{proof} 
Examples of  $\Psi$-mixing processes are finite state Markov chains of any order but also Variable Length Markov Chains (VLMC) on a finite state (see Lemma 3.1 in \cite{context}).\\ 
Even if  $\Psi$ ($\Phi$, $\alpha$)-mixing processes are often used in probability theory, lots of useful processes (like ARMA processes) are not $\Psi$ ($\Phi$, $\alpha$)-mixing. In the following two subsections, we provide a class of $\eta$ mixing processes for which the function $c(t)$ is well defined provided the mixing is sufficiently fast and the variables $X_i$ are almost surely bounded. This condition is close to the one used in \cite{Bric_dembo} by Bric and Dembo for one other class of mixing processes (namely $\alpha$ mixing processes). Then we prove that if the sequence has some structure (here, the sequence is a Bernoulli shift)  then the condition on the speed of mixing may be weakened. We refer to \cite{book} for other examples of $\theta$ and $\eta$ weakly mixing processes (including ARMA and ARCH processes). 
\subsection{Super mixing  processes} We prove that if the process $(X_i)_{i\in\N}$ is $\eta$ weakly dependent (recall Definition \ref{def_weak_dep}) with dependence coefficient  $\eps(n)=O\left(\theta^{n(\ln n)^\beta}\right)$ with $0<\theta <1$ and $\beta >1$ then the function $c(t)$ is well defined provided $|X_i|\leq M $a.e. A $\eta$ weakly dependent process with dependence coefficient  $\eps(n)=O\left(\theta^{n(\ln n)^\beta}\right)$will be called a super mixing process.
\begin{prop} 
Assume $(X_n)_{n\in\N}$ is a $\eta$ weakly dependent process with mixing coefficients $\eps(n) = O(e^{-c n(\ln n)^\beta})$ with $c>0$, $\beta>1$. Moreover, assume that there exists $M>0$ such that $|X_i|\leq M $ a.e. Then the sequence $c(t)$ is well defined on $\R$. 
\end{prop}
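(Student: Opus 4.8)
The plan is to show that $h(n)=\log\E(e^{tS_n})$ (with $S_n=X_1+\cdots+X_n$) satisfies the approximate subadditivity hypothesis of Lemma~\ref{approx} for every fixed $t\in\R$, so that $c(t)=\lim_n h(n)/n$ exists and is finite. Because $|X_i|\le M$ a.e., all exponential moments are finite, so $h$ is well defined on all of $\R$ and it remains only to produce a suitable comparison $h(n+m)\le h(n)+h(m)+\Delta(n+m)$.

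First I would reduce the unbounded maps $x\mapsto e^{t\sum_i x_i}$ to the bounded Lipschitz functions required by Definition~\ref{def_weak_dep}. Since $|X_i|\le M$ a.e., replacing each coordinate by its truncation $\phi(s)=\max(-M,\min(M,s))$ leaves $e^{tS_n}$ unchanged almost surely, while the truncated function $\tilde f(x)=\exp\big(t\sum_i\phi(x_i)\big)$ satisfies $\Vert\tilde f\Vert_\infty\le e^{|t|nM}$ and $\mbox{lip}(\tilde f)\le |t|e^{|t|nM}$ with respect to the distance $d$. Thus the $\eta$-weak dependence estimate of Definition~\ref{def_weak_dep} applies to these functions, with the relevant expectations unchanged.

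Next, fix $N=n+m$ and a gap length $k\le m$ to be chosen. Writing $S_N=S_n+A+B$ with $A=X_{n+1}+\cdots+X_{n+k}$ and $B=X_{n+k+1}+\cdots+X_N$, I use $|A|\le kM$ to bridge the gap, so $e^{tS_N}\le e^{|t|kM}e^{tS_n}e^{tB}$, and then apply the $\eta$-dependence bound to $f=e^{tS_n}$ (on $X_1,\dots,X_n$) and $g=e^{tB}$ (on $X_{n+k+1},\dots,X_N$), whose index blocks are separated by a gap $\ge k$. By the previous step $c(f,g)\le|t|N\,e^{|t|NM}$, and, exactly as in the previous proposition, stationarity gives $\E(e^{tB})\le e^{|t|kM}\E(e^{tS_m})$, so that
$$\E(e^{tS_N})\le e^{2|t|kM}\,\E(e^{tS_n})\E(e^{tS_m})+e^{|t|kM}\eps(k)\,|t|N\,e^{|t|NM}.$$
Dividing by $\E(e^{tS_n})\E(e^{tS_m})\ge e^{-|t|NM}$ and taking logarithms yields
$$h(N)\le h(n)+h(m)+\Delta(N),\qquad \Delta(N)=2|t|Mk+\log\big(1+\eps(k)\,|t|N\,e^{2|t|NM}\big).$$

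The crux is the choice of $k=k(N)$, and this is where the super-mixing rate and the hypothesis $\beta>1$ are indispensable. To neutralise the factor $e^{2|t|NM}$ I must choose $k$ with $c\,k(\ln k)^\beta\ge 2|t|MN$, which forces $k(N)\asymp N/(\ln N)^\beta$; then $\eps(k)e^{2|t|NM}$ stays bounded, the logarithmic term in $\Delta(N)$ is $O(\log N)$, and $\Delta(N)=O\big(N/(\ln N)^\beta\big)$. After the usual monotonisation $\Delta$ is non-decreasing and, crucially, satisfies Hammersley's summability condition
$$\sum_{r}\frac{\Delta(r)}{r(r+1)}\asymp\sum_r\frac{1}{r(\ln r)^\beta}<\infty$$
precisely because $\beta>1$, so Lemma~\ref{approx} applies and $c(t)$ exists. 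The main obstacle is exactly this balancing act: $k$ must be large enough for the super-exponential mixing to kill the blow-up $e^{2|t|NM}$, yet small enough that the gap-bridging cost $e^{2|t|kM}$ produces a summable $\Delta$. Plain geometric mixing ($\eps(n)=O(\theta^n)$) would force $k\asymp N$ and hence a divergent $\sum_r \Delta(r)/r^2$, which is precisely why the slightly faster rate $\eps(n)=O\big(e^{-cn(\ln n)^\beta}\big)$ with $\beta>1$ is needed.
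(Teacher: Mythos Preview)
Your proof is correct and follows essentially the same route as the paper: split $S_{n+m}$ into two blocks separated by a gap of length $k\asymp N/(\ln N)^{\kappa}$, use $|X_i|\le M$ both to absorb the gap and to render the exponentials bounded Lipschitz, apply the $\eta$-weak dependence bound across the gap, and then feed the resulting $\Delta(N)=O\big(N/(\ln N)^{\kappa}\big)$ into Hammersley's Lemma~\ref{approx}. The only cosmetic difference is in the gap exponent---the paper takes any $1<\kappa<\beta$ while you take $\kappa=\beta$---but both choices satisfy \eqref{eq:approx} and kill the $e^{2|t|NM}$ factor, so the arguments are interchangeable.
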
  
\begin{proof} 
For any $0<j<k$,  let $\displaystyle S_j^k = \sum_{\ell=j}^k X_\ell$, because $|X_i|\leq M$ a.e.,
$$e^{-t(k-j)M}\leq  \E(e^{tS_j^k}) \leq e^{t(k-j)M}\/.$$
Also, for any $j\leq  \ell \leq j$,
$$e^{-t(\ell-j) M}  \E(e^{tS_{\ell}^k})\leq \E(e^{tS_j^k}) \leq e^{t(\ell - j)M} \E(e^{tS_\ell^k}) \/.$$
Remark that the function~:  $x \mapsto e^{tx}$ is bounded above by $e^{tM}$ and has Lipschitz constant $te^{tM}$ for $x\in[-M\/,M]$. \\
Fix an integer $0<r< \max(n\/,m)$.  Firstly, assume that $n\leq m$ and, using the definition of $\eta$ weak dependence, we get:
\begin{eqnarray*}
\lefteqn{\E(e^{tS_{n+m}}) = \E(e^{tS_1^n}e^{tS_{n+1}^{n+m}})}\\
&=&\E\left(e^{tS_1^{n}} \cdot e^{tS_{n+r+1}^{n+m}} \cdot e^{tS_{n+1}^{n+r}}\right)\\
&\leq&e^{trM}\left(\E(e^{tS_1^{n}})\cdot \E(e^{tS_{n+r+1}^{n+m}}) +  \eps(r) (n+m) te^{tnM}e^{t(m-r)M}  \right)\\
&\leq&e^{2trM} \E(e^{tS_n})\E(e^{tS_m}) + (n+m)\eps(r)te^{2t(n+m)M} \E(e^{tS_n})\E(e^{tS_m})\\
&\leq& \E(e^{tS_n})\E(e^{tS_m}) (e^{2trM}+(n+m) \eps(r)te^{2t(n+m)M})\/.
\end{eqnarray*}
We conclude the proof by choosing $r=O(\frac{n+m}{\ln(n+m)^\kappa})$, $1<\kappa<\beta$ and applying Lemma \ref{approx} with a function:
$$\Delta(r) = O\left(\frac{r}{(\ln r)^\kappa}\right)\/.$$
If $n>m$, the proof is similar but uses the decomposition~:\\
$S_{n+m} = S_1^m+S_{m+1}^{m+r}+S_{m+r+1}^{n+m} \/.$
\end{proof}
\subsection{Bernoulli shifts}
Causal Bernoulli shifts are processes defined as:
$$X_n  = H(\xi_{n-j}\/, \ j\in \N)$$
with $H$ a measurable function and $(\xi_n)_{n\in\Z}$ an i.i.d. process. We shall assume the following regularity condition on $H$~: define the continuity coefficients
$$d_n = \Vert \sup_{u=(u_0\/, u_{-1}\/,\ldots)} |H(\xi_{n-i}\/, i\in\N) - H(\xi_n\/,\ldots \/, \xi_1\/, u_0\/, u_{-1}\/, \ldots)| \/ \Vert_\infty$$
and  assume that the sequence $d_n$ is summable. By adapting the arguments of \cite{book} we may prove that such a process is $\theta$-dependent with mixing coefficient $\theta(n)=d_n$. 
\begin{prop}\label{Bern_shift}
Assume that $(X_n)_{n\in\N}$ is a Bernoulli shift satisfying the summability condition for the continuity coefficients $d_n$. Then the sequence $\ln \E(e^{tS_n})$  satisfies the hypotheses of Lemma \ref{approx} and thus $c(t)$ is well defined.
\end{prop}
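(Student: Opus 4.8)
The plan is to verify the approximate subadditivity hypothesis of Lemma \ref{approx} for $h(n)=\log\E(e^{tS_n})$ (with $S_j^k=\sum_{\ell=j}^k X_\ell$ and $t\geq 0$ fixed) by exploiting the causal Bernoulli shift structure through an explicit coupling, rather than through the abstract $\theta$-dependence coefficient. The guiding idea is that, since each $X_\ell$ is a function of the innovations $(\xi_{\ell-j})_{j\geq0}$, the influence of the first block on the second can be cut by feeding the second block a fresh, independent copy of the distant past; the summability of $(d_n)$ then makes the resulting error uniformly small, almost surely.

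First I would introduce an independent copy $(\xi'_j)_{j\in\Z}$ of the innovations and set, for $\ell\geq n+1$,
$$\tilde X_\ell = H(\xi_\ell,\ldots,\xi_{n+1},\xi'_n,\xi'_{n-1},\ldots),$$
i.e. the same functional with all innovations of index $\leq n$ replaced by the independent ones, and put $\tilde S_{n+1}^{n+m}=\sum_{\ell=n+1}^{n+m}\tilde X_\ell$. Two facts drive the argument: (i) $S_1^n$ is a function of $(\xi_j)_{j\leq n}$ while $\tilde S_{n+1}^{n+m}$ is a function of $(\xi_j)_{n+1\leq j\leq n+m}$ and $(\xi'_j)_{j\leq n}$, so the two are \emph{independent}; and (ii) by the shift-invariance of the i.i.d. innovations, $(\tilde X_{n+1},\ldots,\tilde X_{n+m})$ has the same law as $(X_1,\ldots,X_m)$, whence $\tilde S_{n+1}^{n+m}$ has the same distribution as $S_m$.

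The key quantitative step is an \emph{almost sure} control of the coupling error. By the definition of the continuity coefficients together with shift-invariance, $\nor X_\ell-\tilde X_\ell\nor_\infty\leq d_{\ell-n}$, so that
$$\left|S_{n+1}^{n+m}-\tilde S_{n+1}^{n+m}\right|\leq\sum_{\ell=n+1}^{n+m}d_{\ell-n}=\sum_{k=1}^m d_k\leq D,\qquad D:=\sum_{k\geq1}d_k<\infty,$$
finiteness of $D$ being exactly the summability assumption. Because the bound is pointwise, the exponential is harmless: from $S_{n+1}^{n+m}\leq\tilde S_{n+1}^{n+m}+D$ a.s. we get $e^{tS_{n+1}^{n+m}}\leq e^{tD}e^{t\tilde S_{n+1}^{n+m}}$, and multiplying by $e^{tS_1^n}\geq0$, taking expectations, and invoking the independence and equality in law above yields
$$\E(e^{tS_{n+m}})\leq e^{tD}\,\E(e^{tS_1^n})\,\E(e^{t\tilde S_{n+1}^{n+m}})=e^{tD}\,\E(e^{tS_n})\,\E(e^{tS_m}).$$
Taking logarithms gives $h(n+m)\leq h(n)+h(m)+tD$, which is the hypothesis of Lemma \ref{approx} with the constant sequence $\Delta(r)=tD=O(1)$, for which (\ref{eq:approx}) holds trivially; finiteness of $h$ follows from $h(n)\leq n\,h(1)+(n-1)tD$ once $\E(e^{tX_1})<\infty$. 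Lemma \ref{approx} then delivers the existence of $c(t)=\lim_n h(n)/n$.

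The main obstacle is designing the coupling so that it achieves three things simultaneously: genuine independence of the two blocks, preservation of the law of the second block (so the comparison is with $\E(e^{tS_m})$ and not some distorted quantity), and control of the discrepancy \emph{almost surely} rather than merely in mean. This almost sure bound is the decisive improvement over the super mixing case: there the discrepancy was controlled only in expectation, forcing one to impose $|X_i|\leq M$ and to insert a slowly growing gap $r$ in order to tame the non-Lipschitz map $x\mapsto e^{tx}$, whereas here the summability of $(d_n)$ yields a uniform constant $D$, hence a constant $\Delta$, with no gap and no boundedness requirement.
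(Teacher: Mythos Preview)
Your proof is correct and follows essentially the same route as the paper: both decouple the two blocks $S_1^n$ and $S_{n+1}^{n+m}$ by modifying the innovations with index $\leq n$ that feed into the second block, and both exploit that the resulting discrepancy is controlled \emph{almost surely} by partial sums of the $d_k$, producing a bounded $\Delta$ in Lemma~\ref{approx}. The one substantive difference is the decoupling device: the paper freezes the distant past at a \emph{deterministic} sequence $u$, obtaining auxiliary sums $U_1^n,\,U_{n+1}^{n+m}$ that are independent but whose laws differ from those of $S_n,\,S_m$, so it must apply the $d_k$-bound a second time to convert $\E(e^{tU_1^n})$ and $\E(e^{tU_{n+1}^{n+m}})$ back into $\E(e^{tS_n})$ and $\E(e^{tS_m})$, ending with the constant $4tD$. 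Your independent-copy coupling preserves the law of the second block exactly, so only one direction of the bound is needed and you obtain $tD$; this is a cleaner execution of the same idea, not a different strategy.
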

\begin{proof}
We fix a sequence of real numbers $u=(u_0\/, u_{-1}\/, \ldots )$ and we write
\begin{eqnarray}
S_i^\ell&=&\sum_{j=i}^\ell X_j \nonumber \\
	&=&\sum_{j=i}^\ell H(\xi_j\/, \ldots \/, \xi_i\/,\ldots )\nonumber\\
	&=&\underbrace{\sum_{j=i}^\ell H(\xi_j\/, \ldots \/, \xi_i\/, u_0\/, u_{-1}\/, \ldots )}_{:=U_i^\ell} +\nonumber \\
&& \underbrace{\sum_{j=i}^\ell H(\xi_j\/, \ldots \/, \xi_i\/,\ldots ) -  H(\xi_j\/, \ldots \/, \xi_i\/, u_0\/, u_{-1}\/, \ldots )}_{:=d_i^\ell(u\/,\xi)} \/.\label{SetU}
\end{eqnarray}
Using the stationarity of $(\xi_n)_{n\in\N}$, we have that 
$$d_i^\ell \leq \sum_{j=i}^\ell d_{j-i} = \sum_{j=0}^{\ell-i} d_j \/.$$
Now, $U_1^n$ and $U_{n+1}^m$ are independent random variables and thus,
$$\E(e^{tS_{n+m}}) \leq e^{t(\sum_{i=1}^nd_i +\sum_{i=1}^m d_i) }\E(e^{tU_1^n})\E(e^{tU_{n+1}^{n+m}})\/.$$
Applying once more (\ref{SetU}), we get:
$$\E(e^{tS_{n+m}})\leq \exp\left[4t\displaystyle\sum_{i=1}^{\max(n\/,m)}d_i\right]\E(e^{tS_n})\E(e^{tS_m})\/.$$
If we denote $D = \displaystyle \sum_{j\in\N}d_j$, we get
$$\E(e^{tS_{n+m}}) \leq e^{4tD} \E(e^{tS_n})\E(e^{tS_m}) $$
and we conclude by applying Lemma \ref{approx}. 
\end{proof}
\begin{rem}
In the above proposition, we could replace the hypotheses of summability of the sequence $(d_j)_{j\in\N}$ by the summability of 
$$\frac{\Delta(r)}{r(r+1)} \ \mbox{with} \ \Delta(r) = \sum_{j=0}^r d_j \/,$$
Lemma \ref{approx} would apply as well and the limit $c(t)$ would be well defined. 
\end{rem}
In \cite{book}, it is mentionned that stationary ARMA processes are  examples of Bernouilli shifts. Non linear autoregressive processes may also be examples of Bernouilli shifts. Nevertheless, in order to satisfy that the $d_j$ are well defined, it requires that the innovation is bounded. We claim that Proposition \ref{Bern_shift}  remains true for some unbounded Bernouilli shifts but we where unable to prove it. 
\section{Estimation}\label{est}
\subsection{Definition of estimators}
In this section, we assume that the sequence $(X_i)_{i\geq 1}$ is stationary and that the hypotheses of Proposition \ref{exist} are satisfied. For $r\in\N$,   the functions $\E(e^{tX_1})$ and $\E(e^{tY_r})$ may be estimated by their empirical moment versions: for $k\in \N$,
$$\hat{m}_k(t) = \frac1k \sum_{i=1}^k e^{t X_i}\/,$$
$$\hat{M^r_k}(t) = \frac1k\sum_{i=0}^{k-1} e^{tZ_i^r}\/,$$
where $Z_i^r = \displaystyle \sum_{j=1}^{r} X_{j+ir}$. Then we define $\widehat{w}^i$ as the  positive solution to $\log \hat{m}_k(t) =0$ and $\widehat{w}_r$ as the positive solution to $\frac1r \log \hat{M^r_k}(t) = 0$. We shall prove that  $\widehat{w}^i$ is a consistent estimator of $w^i$ and there exists an $r=r(k)$ such that $\widehat{w}_r=\widehat{w}^d$ is a consistent estimator of  $w^d$. We shall also prove that they satisfy a central limit theorem.\\
Before stating and proving our main results on the asymptotic properties of the estimators $\widehat{w}^i$ and $\widehat{w}_r$, we prove that $\widehat{w}^i$ exists almost surely (the proof for $\widehat{w}_r$ will be done later because it requires some weak dependence property for $(Z_i^r)_{i\in\N}$ which is proven in Lemma \ref{heredit_eta}).
\begin{prop}\label{exist_w^i}
Assume that the sequence $(X_i)_{i\in\N}$ is $\eta$ or $\theta$ weakly dependent and satisfies Condition (E). Then, $\widehat{w}^i$  exists eventually almost surely as $k\longrightarrow \infty$.
\end{prop}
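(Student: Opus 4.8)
The plan is to exploit the fact that, unlike the true moment generating function $m(t)=\E(e^{tX_1})$, the empirical version $\hat m_k(t)=\frac1k\sum_{i=1}^k e^{tX_i}$ is a finite sum of exponentials, hence finite, smooth and log-convex on all of $\R$, with $\hat m_k(0)=1$. In particular only the first two parts of Condition (E), namely $\E(X_1)<0$ and $\P(X_1>0)>0$, will actually be used at the empirical level: the integrability part (3) becomes automatic since $\hat m_k$ is finite everywhere. I would therefore restate the existence of $\widehat{w}^i$ as follows: $\log\hat m_k$ vanishes at $0$, has negative right-derivative there, and tends to $+\infty$, so by the intermediate value theorem together with the convexity of $\log\hat m_k$ it possesses a (unique) positive zero.

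First I would record that $\hat m_k'(0)=\frac1k\sum_{i=1}^k X_i=\bar X_k$, so that $\frac{d}{dt}\log\hat m_k(t)\big|_{t=0}=\bar X_k$. By the strong law of large numbers for $\theta$- (resp. $\eta$-) weakly dependent stationary sequences with summable coefficients, $\bar X_k\to\E(X_1)<0$ almost surely; hence there is a random $K_1$ such that $\hat m_k'(0)<0$ for every $k\ge K_1$, and consequently $\log\hat m_k(t)<0$ for $t>0$ small.

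Next I would establish that, almost surely, the sample eventually contains a strictly positive observation, which forces $\hat m_k(t)\to+\infty$ as $t\to\infty$ (the term $e^{t\max_i X_i}$ then dominates). The natural obstacle here is that $\mathbf{1}_{\{x>0\}}$ is not Lipschitz, so the weak-dependence law of large numbers does not apply to it directly. I would circumvent this by fixing a bounded Lipschitz function $\phi_\eps$ with $\phi_\eps\le\mathbf{1}_{\{x>0\}}$, $\phi_\eps(x)=0$ for $x\le0$ and $\phi_\eps(x)=1$ for $x\ge\eps$; the SLLN applied to $\phi_\eps(X_i)$ gives $\frac1k\sum_{i=1}^k\phi_\eps(X_i)\to\E\phi_\eps(X_1)$ almost surely, and choosing $\eps$ small enough that $\E\phi_\eps(X_1)>0$ (possible because $\P(X_1>0)>0$) shows that almost surely infinitely many $X_i$ are positive. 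Thus there is a random $K_2$ after which $\max_{1\le i\le k}X_i>0$.

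Finally, for every $k\ge\max(K_1,K_2)$ the function $\log\hat m_k$ starts at $0$ with negative slope, becomes negative, and diverges to $+\infty$; continuity together with the convexity of $\log\hat m_k$ then yields a unique positive root, which is $\widehat{w}^i$. I expect the main work to lie in the almost-sure positive-observation step, that is, in the Lipschitz truncation argument combined with whichever strong law for weakly dependent sequences one invokes; the convexity and intermediate value arguments giving the final conclusion are routine.
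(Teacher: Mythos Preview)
Your proof is correct and follows the same skeleton as the paper's: reduce existence of $\widehat w^i$ to the two empirical conditions $\bar X_k<0$ and $\max_{i\le k} X_i>0$ (Mammisch's third condition being automatic since $\hat m_k$ is a finite sum of exponentials), and verify each eventually almost surely. The paper, however, bypasses your Lipschitz-truncation step entirely: it observes once that $\theta$- or $\eta$-weak dependence implies ergodicity (citing \cite{DD}), after which Birkhoff's ergodic theorem delivers the strong law for \emph{every} integrable function of the process, including the raw indicator $\mathbf 1_{\{X_i>0\}}$. Your route is more self-contained in that it stays strictly within the weak-dependence covariance framework, but at the cost of the extra $\phi_\eps$ approximation; the paper's route is shorter but leans on the ergodicity implication as a black box.
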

\begin{proof}
We begin by noting that the $\theta$ or $\eta$ weak dependence implies ergodicity, see for example \cite{DD}. Following Section \ref{sec_exist}, we have that $\widehat{w}^i$ exists and is unique if and only if,
\begin{enumerate}
\item $\frac1k \sum_{i=1}^k X_i <0$,
\item $\{i=1\/, \ldots \/, k \ / \ X_i>0\}$ is not empty.
\end{enumerate} 
Mammisch's third condition is satisfied with $a=\infty$ because here, expectations are finite sums. The two above conditions are eventually almost everywhere satisfied because of the ergodic theorem.
\end{proof}
\subsection{Asymptotic properties of $\widehat{w}^i$}
Asymptotic properties of the estimators $\widehat{w}^i$ and $\widehat{w_r}$ are done by using the same approach as the one used to prove results on asymptotic properties for $M$-estimators in a parametric context (see \cite{VanDerVaart} section $5$). 
It is known (see \cite{DL,book}) that the process $(X_i)_{i\in\N}$ satisfies a central limit theorem with asymptotic variance 
$$\Gamma^2=\sum_{i=0}^{\infty}\cov(X_0\/,X_i)\/,$$
provided that the sequence $\cov(X_0\/,X_i)$ is summable.\\
We obtain a central limit theorem for $\widehat{m}_k(t)$ by  proving that the sequence $(e^{tX_n})_{n\in\N}$ is also weakly dependent. Then we prove a central limit theorem for the $M$-estimator $\widehat{w}^i$. \\
Let us recall the following results  from \cite{book} (Theorem 7.1 and Section 7.5.4).
\begin{theo}\label{TCL1}
Let  $(Z_n)_{n\in\N}$ be an $\eta$-weakly dependent sequence with  $\eps(n) = O(n^{-2-\kappa})$ for $\kappa>0$. Then,
$$\Gamma^2 = \sum_{n\geq 0} \cov(Z_0\/, Z_n) \/,$$
is well defined  and
$$\sqrt{n}\left(\frac1n\sum_{i=0}^{n-1}Z_i - \E(Z_0) \right) \stackrel{n\rightarrow \infty}{\longrightarrow} \NN(0\/,\Gamma^2)\/.$$
\end{theo}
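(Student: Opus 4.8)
The plan is to reproduce, in the $\eta$-weakly dependent setting, the classical big-block/small-block (Bernstein) proof of the central limit theorem via characteristic functions, as in \cite{book}. The feature that makes the $\eta$-coefficients directly usable is that the map $x\mapsto e^{\mathrm{i}tx}$ is bounded by $1$ and Lipschitz with constant $|t|$: the characteristic function of any block sum $e^{\mathrm{i}t\sum x_j/\sqrt n}$ is bounded by $1$ with Lipschitz constant $|t|/\sqrt n$ with respect to the $\ell^1$ distance, so it lies exactly in the class of functions against which $\eps(n)$ controls covariances. I may assume a moment slightly above order $2$ (automatic in the intended application, where $Z_n=e^{tX_n}$ with $X_n$ bounded), which is what the Lindeberg step will ultimately require.

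First I would check that $\Gamma^2$ is well defined. Applying the $\eta$-bound to the (truncated) identity functions, which are Lipschitz with constant $1$ and bounded by the truncation level, gives $|\cov(Z_0,Z_n)|\leq C\,\eps(n)$ up to a truncation error controlled by the moment assumption; since $\eps(n)=O(n^{-2-\kappa})$ is summable, the series $\sum_{n\geq0}\cov(Z_0,Z_n)$ converges absolutely. The same estimate yields $\frac1n\mathrm{Var}\big(\sum_{i=0}^{n-1}Z_i\big)\to\Gamma^2$.

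Next I would fix $t\in\R$, partition $\{0,\ldots,n-1\}$ into $k\approx n/(p+q)$ big blocks of length $p=n^{a}$ separated by small blocks of length $q=n^{b}$ with $b<a<1$, and denote by $B_1,\ldots,B_k$ the big-block sums and by $\tilde S_n=\sum_j B_j$ their total. The small-block contribution has $L^2$-norm of relative order $\sqrt{kq/n}=n^{(b-a)/2}\to0$, so it is negligible in probability and it suffices to obtain a CLT for $\tilde S_n/\sqrt n$. Peeling off one big block at a time and applying the $\eta$-coefficient at each step to the factor $e^{\mathrm{i}tB_j/\sqrt n}$ (a function of $p$ variables) against the product over the remaining $\leq n$ variables separated by the gap $q$, one gets
\begin{equation*}
\left|\E\Big(e^{\mathrm{i}t\tilde S_n/\sqrt n}\Big)-\prod_{j=1}^{k}\E\big(e^{\mathrm{i}tB_j/\sqrt n}\big)\right|\leq C\,k\,\sqrt n\,|t|\,\eps(q),
\end{equation*}
the factor $\sqrt n$ arising because the far factor couples $\sim n$ variables through a function of Lipschitz constant $|t|/\sqrt n$. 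With $\eps(q)=O(q^{-2-\kappa})$ this bound is of order $n^{3/2-a-b(2+\kappa)}$, which tends to $0$ provided $a+b(2+\kappa)>3/2$; taking $b$ just below $a$ this holds as soon as $a(3+\kappa)>3/2$, i.e. $a>\tfrac{3}{2(3+\kappa)}<1$, so admissible $a,b$ exist.

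Finally, by stationarity the $B_j/\sqrt n$ are identically distributed and, having reduced the problem to independent summands, I would apply the Lindeberg--Feller theorem: the convergence $\sum_j\mathrm{Var}(B_j)/n\to\Gamma^2$ follows from the variance estimate of the second paragraph, and the Lindeberg condition follows from the moment bound together with the negligibility of a single block. The main obstacle is precisely the bookkeeping in the factorization step: the block length $p=n^{a}$, the gap $q=n^{b}$ and the number of blocks $k$ must be tuned \emph{simultaneously} so that both the small-block variance $n^{b-a}$ and the coupling error $k\sqrt n\,\eps(q)$ vanish, and it is here that the \emph{polynomial} rate $\eps(n)=O(n^{-2-\kappa})$, rather than mere summability, is indispensable.
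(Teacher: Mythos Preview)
The paper does not supply its own proof of this theorem: it is explicitly quoted from \cite{book} (Theorem~7.1 and Section~7.5.4) and used as a black box. Your outline---Bernstein's big-block/small-block decomposition, factorization of the characteristic function via the $\eta$-coefficient applied to the bounded Lipschitz functions $x\mapsto e^{\mathrm{i}t\sum x_j/\sqrt n}$, and a Lindeberg--Feller conclusion---is precisely the strategy used in that reference, so you are essentially reconstructing the cited proof rather than proposing an alternative.

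One remark worth keeping: the theorem as stated in the paper omits any moment hypothesis on $(Z_n)$, while your argument (rightly) invokes a moment of order $2+\delta$ both for the truncation bound on $\cov(Z_0,Z_n)$ and for the Lindeberg condition. Such a hypothesis is indeed part of the results in \cite{book} being cited, and it is verified in every application made in the paper (where $Z_n=e^{tX_n}$ with higher exponential moments available), so your added assumption is not a defect of your proof but a tacit omission in the statement.
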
 
As a consequence, we get the following consistency result for $\widehat{w}^i$ as well as asymptotic normality. In order to use Theorem \ref{TCL1}, we first need to prove that the sequence $(e^{tX_n})_{n\in\N}$ is also $\eta$-weakly dependent with $\eps(n)$ decreasing to zero sufficiently rapidly. 
\begin{theo}\label{TCL2}
Assume $(X_n)_{n\in\N}$ is $\eta$-weakly dependent with  $\eps(n) = O(\theta^n)$, $0<\theta<1$.  We have for any $t\in[0\/,u_0[$ that
$$\Gamma^2(t) = \sum_{n\geq 0} \cov(e^{tX_0}\/,e^{tX_n})$$
is well defined on $[0\/,u_0[$ and
$$\sqrt{n}\left(\widehat{m}_n(t) - \E(e^{tX_0}) \right) \stackrel{k\rightarrow \infty}{\longrightarrow} \NN(0\/,\Gamma^2(t))\/.$$
$\widehat{w}^i$ converges in  probability to $w^i$ and 
$$\sqrt{n} \left(\widehat{w}^i-w^i\right)  \stackrel{n\rightarrow \infty}{\longrightarrow} \NN(0\/,\Gamma^2_i)$$
with $\Gamma^2_i= \displaystyle\frac{\Gamma^2(w^i)}{\E(X_1e^{w^iX_1})^2}$. 
\end{theo}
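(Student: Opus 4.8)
The plan is to prove Theorem \ref{TCL2} in three stages, following the $M$-estimator strategy alluded to in the text. First I would establish that the transformed sequence $(e^{tX_n})_{n\in\N}$ inherits $\eta$-weak dependence with a geometrically decaying coefficient, so that Theorem \ref{TCL1} applies directly to it. The key observation, already noted in the proof of the super mixing proposition, is that the map $\phi_t: x\mapsto e^{tx}$ is Lipschitz on any compact set; more care is needed here because $X_n$ need not be bounded. One composes a test function $f$ acting on $(e^{tX_{i_1}},\ldots)$ with $\phi_t$ componentwise and checks how $\Vert f\circ\phi_t\Vert_\infty$ and $\mbox{lip}(f\circ\phi_t)$ relate to those of $f$. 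Since $\phi_t$ is unbounded, one must either truncate and control the tail via the moment assumption $\E(e^{2tX_1})<\infty$ implicit in $t<u_0$, or restrict to bounded Lipschitz test functions and absorb the local Lipschitz constant into the covariance bound. Granting this, $\eps_{e^{tX}}(n)=O(\theta^n)=O(n^{-2-\kappa})$ for every $\kappa>0$, so Theorem \ref{TCL1} yields the stated CLT for $\widehat{m}_n(t)$ with the asymptotic variance $\Gamma^2(t)$, which is finite because geometric decay forces summability of $\cov(e^{tX_0},e^{tX_n})$.

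Second I would prove consistency of $\widehat{w}^i$. By Proposition \ref{exist_w^i}, $\widehat{w}^i$ exists eventually almost surely. The ergodic theorem gives $\widehat{m}_k(t)\to\E(e^{tX_1})$ pointwise, and convexity of $\log\widehat{m}_k$ together with the uniform-on-compacts convergence of convex functions (the same fact invoked in Property \ref{exist}) upgrades this to uniform convergence on a neighborhood of $w^i$. Since $\lambda(t)=\log\E(e^{tX_1})$ has a strict sign change and nonvanishing derivative $\lambda'(w^i)=\E(X_1e^{w^iX_1})/\E(e^{w^iX_1})>0$ at its unique positive root $w^i$, the roots of the approximating convex functions converge to $w^i$, giving $\widehat{w}^i\stackrel{\P}{\to}w^i$.

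Third I would derive asymptotic normality by the delta method in its $M$-estimator form. By definition $\log\widehat{m}_n(\widehat{w}^i)=0=\log\E(e^{w^iX_1})$ (the latter being $\lambda(w^i)=0$). A first-order Taylor expansion of $t\mapsto\log\widehat{m}_n(t)$ about $w^i$, evaluated at $t=\widehat{w}^i$, yields
$$\sqrt{n}(\widehat{w}^i-w^i) = -\frac{\sqrt{n}\bigl(\log\widehat{m}_n(w^i)-\lambda(w^i)\bigr)}{\frac{d}{dt}\log\widehat{m}_n(t)\big|_{t=\xi_n}}$$
for some $\xi_n$ between $\widehat{w}^i$ and $w^i$. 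The numerator, after a further linearization of $\log$ around $\E(e^{w^iX_1})=1$, is asymptotically $\sqrt{n}(\widehat{m}_n(w^i)-\E(e^{w^iX_1}))$, which converges to $\NN(0,\Gamma^2(w^i))$ by stage one. The denominator converges in probability to $\lambda'(w^i)=\E(X_1e^{w^iX_1})$ (using $\E(e^{w^iX_1})=1$), by the ergodic theorem applied to $\widehat{m}_n'(t)=\frac1n\sum X_ie^{tX_i}$ and consistency of $\widehat{w}^i$. Slutsky's theorem then gives the claimed limit with $\Gamma^2_i=\Gamma^2(w^i)/\E(X_1e^{w^iX_1})^2$.

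The main obstacle I expect is stage one: transferring $\eta$-weak dependence through the unbounded map $x\mapsto e^{tx}$. The definition of $\eta$-dependence is tailored to bounded Lipschitz functions, so the exponential test functions fall outside the class and a truncation argument controlled by the exponential-moment hypothesis (or a suitable extension of the dependence coefficient to locally Lipschitz functions, as hinted in the remark following Equation (\ref{eqdep})) is required. A secondary delicacy is justifying the two successive linearizations (of the root equation and of the logarithm) uniformly enough that the remainder terms are $o_{\P}(1/\sqrt{n})$; this is routine given the in-probability convergence of $\widehat{w}^i$ and the nonvanishing derivative, but must be stated with care.
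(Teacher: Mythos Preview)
Your three-stage plan is correct and matches the paper's proof essentially line for line: the paper isolates your stage one as a separate lemma (Lemma~\ref{heredit_eta}), carrying out exactly the truncation you anticipate and obtaining the inherited coefficient $\eps_t(r)\leq 2\,\E(e^{(t+\kappa)X_1})\,\eps(r)^{\kappa/(t+\kappa)}$ (so the new rate is a \emph{power} of $\theta^r$, not $\theta^r$ itself, though this is immaterial for applying Theorem~\ref{TCL1}); consistency is handled by citing Lemma~5.10 of \cite{VanDerVaart} rather than arguing convexity directly; and the asymptotic normality is done by Taylor-expanding $\widehat{m}_k$ itself (using $\widehat{m}_k(\widehat{w}^i)=1=\E(e^{w^iX_1})$) rather than $\log\widehat{m}_k$, which spares your extra $\log$-linearization but is otherwise the same delta-method computation.
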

As already mentioned, we begin by proving that the sequence of random variables $(e^{tX_n})_{n\in\N}$ is  $\eta$-weakly dependent. 
\begin{lem}\label{heredit_eta}
Assume $(X_n)_{n\in\N}$ is $\eta$-weakly dependent with mixing coefficient $\eps(r)$. Then, for any $t\in[0\/,u_0[$, the  sequence of random variables $(e^{tX_n})_{n\in\N}$ is  $\eta$-weakly dependent with mixing coefficient $\eps_t(r) \leq 2\E(e^{(t+\kappa)X_1}) \displaystyle\eps(r)^{\frac{\kappa}{t+\kappa}}$ with  $\kappa>0$ such that $t+\kappa\in[0\/,u_0[$.
\end{lem}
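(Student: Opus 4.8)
The plan is to transfer the $\eta$-weak dependence from $(X_n)_{n\in\N}$ to $(e^{tX_n})_{n\in\N}$ by bounding, for arbitrary bounded Lipschitz $f:\R^u\to\R$, $g:\R^v\to\R$ and multi-indices $\ii=(i_1,\ldots,i_u)$, $\jj=(j_1,\ldots,j_v)$ separated by a gap $r$, the quantity $\cov\big(f(e^{tX_{i_1}},\ldots,e^{tX_{i_u}}),g(e^{tX_{j_1}},\ldots,e^{tX_{j_v}})\big)$. The structural observation is that applying $x\mapsto e^{tx}$ coordinatewise leaves the index pattern unchanged, so it suffices to show that the composed test functions are close to admissible ones for $(X_n)$. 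The obstacle is precisely that $x\mapsto e^{tx}$ is bounded and Lipschitz only on half-lines bounded above, hence these compositions are not admissible on all of $\R$. I would remove this by truncation: fix $A>0$ and set $\psi_A(x)=e^{t\min(x,A)}$, so that $\|\psi_A\|_\infty\le e^{tA}$ and $\mbox{lip}(\psi_A)\le te^{tA}$.

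Writing $F=f(e^{tX_{i_1}},\ldots,e^{tX_{i_u}})$, $\tilde F=f(\psi_A(X_{i_1}),\ldots,\psi_A(X_{i_u}))$ and similarly $G,\tilde G$, I would split $|\cov(F,G)|\le|\cov(\tilde F,\tilde G)|+|\cov(F,G)-\cov(\tilde F,\tilde G)|$. Because $\psi_A$ is $te^{tA}$-Lipschitz and bounded by $e^{tA}$, the functions $\tilde f=f\circ(\psi_A,\ldots,\psi_A)$ and $\tilde g$ are bounded Lipschitz with $\|\tilde f\|_\infty\le\|f\|_\infty$ and $\mbox{lip}(\tilde f)\le te^{tA}\mbox{lip}(f)$ (and likewise for $g$), hence admissible in the $\eta$-weak dependence of $(X_n)$ with the same indices $\ii,\jj$. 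This yields $|\cov(\tilde F,\tilde G)|\le \eps(r)\,te^{tA}\big(u\,\mbox{lip}(f)\|g\|_\infty+v\|f\|_\infty\mbox{lip}(g)\big)$.

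For the truncation error I would expand $\cov(F,G)-\cov(\tilde F,\tilde G)=\E[(F-\tilde F)G]+\E[\tilde F(G-\tilde G)]$ together with the analogous difference of the products of expectations, and use $|F-\tilde F|\le\mbox{lip}(f)\sum_{\ell=1}^u(e^{tX_{i_\ell}}-e^{tA})^+$. The Chebyshev-type tail estimate $\E[(e^{tX_1}-e^{tA})^+]\le\E[e^{tX_1}\mathbf{1}_{X_1>A}]\le e^{-\kappa A}\E[e^{(t+\kappa)X_1}]$, valid since $t+\kappa\in[0,u_0[$ and using stationarity, then gives (after collecting the $\|g\|_\infty$ and $\|f\|_\infty$ factors) $|\cov(F,G)-\cov(\tilde F,\tilde G)|\le 2e^{-\kappa A}\E[e^{(t+\kappa)X_1}]\big(u\,\mbox{lip}(f)\|g\|_\infty+v\|f\|_\infty\mbox{lip}(g)\big)$.

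Combining both estimates, $|\cov(F,G)|$ is at most $\big(te^{tA}\eps(r)+2e^{-\kappa A}\E[e^{(t+\kappa)X_1}]\big)$ times $c(f,g)=u\,\mbox{lip}(f)\|g\|_\infty+v\|f\|_\infty\mbox{lip}(g)$. I would then calibrate $A=-\frac{1}{t+\kappa}\ln\eps(r)$ (positive because $\eps(r)<1$), which makes the tail term exactly $2\E[e^{(t+\kappa)X_1}]\eps(r)^{\kappa/(t+\kappa)}$ and the truncated-covariance term of the same order $\eps(r)^{\kappa/(t+\kappa)}$; dividing by $c(f,g)$ and taking the supremum over admissible $f,g,\ii,\jj$ delivers the announced bound on $\eps_t(r)$. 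I expect the only delicate point to be this calibration of $A$: it must be large enough that the tail error $e^{-\kappa A}$ is small, yet small enough that the blow-up $te^{tA}$ of the Lipschitz constant stays controlled, and it is exactly this trade-off, resolved by Chebyshev with the auxiliary exponent $\kappa$, that produces the fractional power $\eps(r)^{\kappa/(t+\kappa)}$ rather than $\eps(r)$ itself.
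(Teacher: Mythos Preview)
Your proposal is correct and follows essentially the same approach as the paper's own proof: truncate via $x\mapsto\min(x,M)$ so that the composed test functions become admissible bounded Lipschitz functions with Lipschitz constant $te^{tM}$, bound the truncated covariance by $\eta$-weak dependence, control the truncation error by the Markov-type tail bound $\E[e^{tX_1}\Id_{X_1>M}]\le e^{-\kappa M}\E[e^{(t+\kappa)X_1}]$, and optimize by taking $M=-\frac{1}{t+\kappa}\ln\eps(r)$. The only cosmetic difference is that the paper splits the covariance directly as $|\cov(F,G)|\le 2\|f\|_\infty\E|G-G^{(M)}|+2\|g\|_\infty\E|F-F^{(M)}|+|\cov(F^{(M)},G^{(M)})|$ rather than writing $\cov(F,G)-\cov(\tilde F,\tilde G)$ and expanding, but this yields the same bound.
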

\begin{proof}
We follow the proof of Proposition 2.1 in \cite{book}. Let $f$ and $g$ be two Lipschitz functions and for $M>0$ fixed, $x\in\R$, denote $x^{(M)}=\min(x\/,M)$. Assume $(\ii\/,\jj)$ are multi-indices such that 
$$i_1<\cdots < i_u\leq i_u+r\leq j_1<\cdots < j_v \/,$$ 
and define:
\begin{eqnarray*}
F(X_\ii) = f(e^{tX_{i_1}}\/, \ldots \/, e^{tX_{i_u}}) && F^{(M)}(X_\ii) = (e^{tX^{(M)}_{i_1}}\/, \ldots \/, e^{tX^{(M)}_{i_u}}) \/,\\ 
G(X_\jj) = g(e^{tX_{j_1}}\/, \ldots \/, e^{tX_{j_v}}) && G^{(M)}(X_\jj) = (e^{tX^{(M)}_{j_1}}\/, \ldots \/, e^{tX^{(M)}_{j_v}}) \/.
\end{eqnarray*}
Then,
\begin{eqnarray*}
|\cov(F(X_\ii)\/,G(X_\jj))| & \leq & 2\Vert f \Vert_\infty \E(|G(X_\jj)-G^{(M)}(X_\jj)|) \\
	& & +2\Vert g \Vert_\infty \E(|F(X_\ii)-F^{(M)}(X_\ii)|) \\
	& & + |\cov(F^{(M)}(X_\ii)\/, G^{(M)}(X_\jj))|\/.
\end{eqnarray*}
Recall that $\E(e^{sX_i})<\infty$ for any $s\in[0\/,u_0[$ and let $\kappa>0$ be such that 
$$t+\kappa \in [0\/,u_0[\/,$$
then  $\E(|e^{(t+\kappa)X_i}|)<\infty$, using the Markov inequality, we get:
\begin{eqnarray*}
 \E(|G(X_\jj)-G^{(M)}(X_\jj)|) &\leq& \mbox{lip}(g) \sum_{k=1}^v\E(|e^{tX_{j_k}}-e^{tX^{(M)}_{j_k}}|)\\
&\leq& 2v \/ \mbox{lip}(g) e^{-\kappa M} \E(e^{(t+\kappa)X_1})\/.
\end{eqnarray*}
Also, since $(X_n)_{n\in\N}$ is $\eta$-weakly dependent,
$$|\cov(F^{(M)}(X_\ii)\/,G^{(M)}(X_\jj))|\leq (u\/\mbox{lip}(f)\ \Vert g\Vert_\infty+ v \/ \mbox{lip}(g)\Vert f\Vert_\infty)te^{tM}\eps(r) \/.$$
Finally, we obtain for $M\geq 1$,
\begin{eqnarray*}
|\cov(F(X_\ii)\/,G(X_\jj))| & \leq & (u\/\mbox{lip}(f)\ \Vert g\Vert_\infty+ v \/ \mbox{lip}(g)\Vert f\Vert_\infty)\\
& & \times (te^{tM}\eps(r) +2\E(e^{(t+\kappa)X_1}) e^{-\kappa M} )\/.
\end{eqnarray*}
To conclude, we choose $M= -\frac1{t+\kappa} \ln(\eps(r))$.
\end{proof}
\begin{proof}[Proof of Theorem \ref{TCL2}]
Lemma \ref{heredit_eta} together with Theorem \ref{TCL1} imply that for any $t \in[0\/,u_0[$,
$$\sqrt{n}\left(\widehat{m}_n(t) - \E(e^{tX_0}) \right) \stackrel{n\rightarrow \infty}{\longrightarrow} \NN(0\/,\Gamma^2(t))$$
provided that the sequence $\eps_t(r)=\displaystyle\eps(r)^{\frac{\kappa}{t+\kappa}} = O(r^{-2-\alpha})$ for some $\alpha>0$. Since we assume that $\eps(r)=O(\theta^r)$, $0<\theta<1$, this condition is satisfied. \\
This convergence in law also leads to 
$$\widehat{m}_n(t) \stackrel{n\rightarrow \infty}{\longrightarrow} \E(e^{tX_0}) \ \mbox{in probability} \/,$$
moreover we have that this convergence takes place almost everywhere because of the ergodic theorem. \\
Now, let us consider the estimator $\widehat{w}^i$ of $w^i$. Following the proof of Lemma 5.10 in \cite{VanDerVaart}, we have that $\widehat{w}^i$ converges to $w^i$ in probability (this uses the convergence in probability of $\widehat{m}_k(t)$, the continuity of the map $t\mapsto \widehat{m}_k(t)$ and the uniqueness of $\widehat{w}^i$ as a positive solution to $ \widehat{m}_k(t)=1$). The central limit theorem follows now from the $\Delta$ method~:
$$\widehat{m}_k(\widehat{w}^i)-\widehat{m}_k(w^i) = (\widehat{w}^i-w^i) \frac{\partial \widehat{m}_k(w^i)}{\partial t} + \frac12 (\widehat{w}^i-w^i)^2 \frac{\partial^2 \widehat{m}_k(\widetilde{w})}{\partial t^2} \/,$$
with $\widetilde{w} \in [\min(w^i\/, \widehat{w}^i)\/, \max(w^i\/, \widehat{w}^i)]$. Thus,
$$\sqrt{k}(\widehat{w}^i-w^i) = \frac{\sqrt{k}(\widehat{m}_k(\widehat{w}^i)-\widehat{m}_k(w^i))}{\displaystyle \frac{\partial \widehat{m}_k(w^i)}{\partial t} + \frac12 (\widehat{w}^i-w^i) \frac{\partial^2 \widehat{m}_k(\widetilde{w})}{\partial t^2}}\/.$$
 We have that 
\begin{eqnarray*}\sqrt{k}(\widehat{m}_k(\widehat{w}^i)-\widehat{m}_k(w^i)) & =&\sqrt{k}( 1-\widehat{m}_k(w^i))\\
&=& \sqrt{k}(\E(e^{w^iX_1}) -\widehat{m}_k(w^i))
\end{eqnarray*}
and therefore it is asymptotically normal with zero mean and variance $\Gamma^2(w^i)$. Moreover,
$$\frac{\partial \widehat{m}_k(w^i)}{\partial t}= \frac1k \sum_{j=1}^{k} X_je^{w^iX_j} \/.$$
This quantity converges in probability to $\E(X_1e^{w^i X_1})$ and we have that\\ $\displaystyle (\widehat{w}^i-w^i) \frac{\partial^2 \widehat{m}_k(\widetilde{w})}{\partial t^2}$ goes to zero in probability. Finally, we have proven that
$$\sqrt{n}(\widehat{w}^i-w^i)\stackrel{k\rightarrow \infty}{\longrightarrow} \NN\left(0\/, \displaystyle\frac{\Gamma^2(w^i)}{\E(X_1e^{w^iX_1})^2}\right)\/.$$
\end{proof}
\begin{rem}
We could relax the hypotheses that $\eps(r)$ decreases to $0$ exponentially fast. It is sufficient that $\displaystyle\eps(r)^{\frac{\kappa}{t+\kappa}} = O(r^{-2-\alpha})$ for some $\alpha>0$. For example, some intermediate speed of mixing like $\eps(r) = \displaystyle O\left(e^{-K(\ln r)^\beta}\right)$, $K>0$, $\beta>1$ or $\eps(r) = \displaystyle O\left(\theta^{n^\alpha}\right)$, $0<\theta<1$, $0<\alpha < 1$ is convenient.
\end{rem}
\subsection{Asymptotic properties of $\widehat{w}^d$}
Now, we are interested in the consistency of $\widehat{w}^d$. 
\begin{theo}\label{TCL3}
Assume that hypotheses of Theorem \ref{TCL2} are satisfied.  Then there exists a sequence $r=r(k) \stackrel{k\rightarrow \infty}\longrightarrow \infty$ such that  $\widehat{w}^d$ converges in probability to $w^d$. 
\end{theo}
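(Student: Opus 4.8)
The plan is to combine, for each fixed block size $r$, the consistency of $\widehat{w}_r$ as an $M$-estimator of $w_r$ (the positive root of $\E(e^{tY_r})=1$ provided by Corollary \ref{conv_adjust}) with the convergence $w_r\to w^d$, and then to select a suitable diagonal sequence $r=r(k)$. Writing the triangle inequality
$$|\widehat{w}_{r(k)} - w^d| \leq |\widehat{w}_{r(k)} - w_{r(k)}| + |w_{r(k)} - w^d|\/,$$
the second term tends to $0$ as soon as $r(k)\to\infty$, by Corollary \ref{conv_adjust}; the whole difficulty is to control the first (estimation) term while letting $r$ grow with $k$.

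First I would fix $r$ and treat $\widehat{M^r_k}(t)$ exactly as $\widehat{m}_k(t)$ was treated in Theorem \ref{TCL2}, but for the stationary block sequence $(Z_i^r)_{i\in\N}$. The key point is that $(Z_i^r)_i$ is again $\eta$-weakly dependent: a Lipschitz function of $u$ blocks is a Lipschitz function of $ur$ of the underlying $X$'s with the same Lipschitz constant (each block being a $1$-Lipschitz sum in the $\ell^1$ metric), and a block-lag of $m$ forces an original-index gap of at least $(m-1)r+1$; hence its mixing coefficients at block-lag $m$ are bounded by $r\,\eps((m-1)r+1)=O((\theta^r)^m)$, which is still geometric for each fixed $r$. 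Applying the heredity argument of Lemma \ref{heredit_eta} to $(Z_i^r)$ (legitimate since $\E(e^{(t+\kappa)Y_r})<\infty$ for $t+\kappa<u_0$ and $r$ large) shows that $(e^{tZ_i^r})_i$ is $\eta$-weakly dependent with geometric coefficients, so Theorem \ref{TCL1} yields $\widehat{M^r_k}(t)\to\E(e^{tY_r})$ in probability (and a.s.\ by the ergodic theorem) as $k\to\infty$.

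Next, for fixed $r$, existence of $\widehat{w}_r$ eventually almost surely follows as in Proposition \ref{exist_w^i}: ergodicity gives $\frac1k\sum_i Z_i^r\to r\E(X_1)<0$ and makes the set $\{i : Z_i^r>0\}$ eventually nonempty (using $\P(Y_r>0)>0$ for $r$ large). Then, repeating the $M$-estimator argument of Theorem \ref{TCL2} (continuity and monotonicity of $t\mapsto\widehat{M^r_k}(t)$, uniqueness of the positive root, and the just-proven convergence of $\widehat{M^r_k}$), I obtain $\widehat{w}_r\to w_r$ in probability as $k\to\infty$, for each fixed $r$.

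Finally I would carry out the diagonal extraction. For each integer $r$ and each $\eps>0$ we have $\P(|\widehat{w}_r - w_r|>\eps)\to0$ as $k\to\infty$; choosing an increasing sequence $(k_r)_r$ with $\P(|\widehat{w}_r - w_r|>1/r)<1/r$ for all $k\geq k_r$, and setting $r(k)=r$ for $k_r\leq k<k_{r+1}$, gives $r(k)\to\infty$ together with $\P(|\widehat{w}_{r(k)} - w_{r(k)}|>\eps)\to0$. Combined with $w_{r(k)}\to w^d$ and the triangle inequality above, this yields $\widehat{w}^d=\widehat{w}_{r(k)}\to w^d$ in probability. The main obstacle is precisely this coupling of the two limits: since only pointwise-in-$r$ (not uniform) rates are available from the block central limit theorem, one cannot let $r\to\infty$ freely with $k$, and the diagonal choice of $r(k)$---slow enough that the estimation error still vanishes, yet divergent so that $w_{r(k)}\to w^d$---is what makes the argument work.
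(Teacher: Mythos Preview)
Your proof is correct and follows essentially the same route as the paper's own proof: you establish that the block process $(Z_i^r)_{i\in\N}$ is $\eta$-weakly dependent (this is the paper's Lemma~\ref{weak_dep_Z}), invoke the heredity/CLT machinery of Theorem~\ref{TCL2} to get $\widehat{w}_r\to w_r$ in probability for each fixed $r$, and then combine this with $w_r\to w^d$ from Corollary~\ref{conv_adjust}. The only difference is one of exposition: you spell out the diagonal extraction of $r(k)$ explicitly, whereas the paper simply asserts the existence of such a sequence.
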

Theorem \ref{TCL3} will be proven by rewriting \ref{TCL2} for $\widehat{w}_r$ instead of $\widehat{w}^i$ and then by using Corollary \ref{conv_adjust}. We only need to prove that the sequence $(Z_i^r)_{i\in\N}$ satisfies some weak dependence property. 
\begin{lem}\label{weak_dep_Z}
Assume that $(X_n)_{n\in\N}$ is $\eta$-weakly dependent with mixing coefficient $\eps(k)$. Then, the sequence $(Z_i^r)_{i\in\N}$ is $\eta$-weakly dependent with mixing coefficient $\eps_Z(k) = r\eps(r(k-1))$. 
\end{lem}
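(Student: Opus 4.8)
The plan is to exploit the block structure of the sequence $(Z_i^r)_{i\in\N}$: each variable $Z_i^r=\sum_{j=1}^r X_{j+ir}$ depends only on the $r$ consecutive coordinates $X_{ir+1},\ldots,X_{ir+r}$, and consecutive blocks occupy disjoint, increasing index ranges. Fix multi-indices $\ii=(i_1,\ldots,i_u)$ and $\jj=(j_1,\ldots,j_v)$ with $i_1<\cdots<i_u\leq i_u+k\leq j_1<\cdots<j_v$ and bounded Lipschitz functions $F\colon\R^u\to\R$, $G\colon\R^v\to\R$. First I would rewrite $F(Z_{i_1}^r,\ldots,Z_{i_u}^r)$ as a function $\widetilde F$ of the $ur$ underlying variables lying in the blocks indexed by $i_1,\ldots,i_u$, and likewise $G(Z_{j_1}^r,\ldots,Z_{j_v}^r)=\widetilde G$ of the $vr$ variables in the blocks indexed by $j_1,\ldots,j_v$. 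Since $i_1<\cdots<i_u$ the left blocks are disjoint and strictly increasing (and similarly on the right), so the concatenated $X$-multi-index is admissible for the definition of $\eps$.

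The next step is to bound the norms of the composed functions. Plainly $\Vert\widetilde F\Vert_\infty=\Vert F\Vert_\infty$ and $\Vert\widetilde G\Vert_\infty=\Vert G\Vert_\infty$. The crucial point is that the summation map $(x_{ir+1},\ldots,x_{ir+r})\mapsto\sum_{j=1}^r x_{ir+j}$ is $1$-Lipschitz for the $\ell^1$ distance; composing with $F$ and summing the block contributions yields $\mbox{lip}(\widetilde F)\leq\mbox{lip}(F)$, and similarly $\mbox{lip}(\widetilde G)\leq\mbox{lip}(G)$. Thus passing from the $Z$-functions to the underlying $X$-functions inflates the number of coordinates from $u,v$ to $ur,vr$ but never increases the Lipschitz constants.

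Then I would compute the separation of the two index ranges: the largest coordinate touched by $\widetilde F$ is $i_ur+r$ while the smallest touched by $\widetilde G$ is $j_1r+1$, so, using $j_1\geq i_u+k$, their difference is at least $r(k-1)+1\geq r(k-1)$. Applying the $\eta$-weak dependence of $(X_n)_{n\in\N}$ to $\widetilde F$ and $\widetilde G$, with $ur$ variables on the left, $vr$ on the right, and gap at least $r(k-1)$, gives
$$|\cov(F(Z_{i_1}^r,\ldots,Z_{i_u}^r),G(Z_{j_1}^r,\ldots,Z_{j_v}^r))|\leq\bigl(ur\,\mbox{lip}(F)\,\Vert G\Vert_\infty+vr\,\Vert F\Vert_\infty\,\mbox{lip}(G)\bigr)\eps(r(k-1)).$$
Factoring out $r$, the right-hand side is exactly $r\,\eps(r(k-1))\,c(F,G)$, whence $\eps_Z(k)\leq r\,\eps(r(k-1))$, as claimed.

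I expect no deep obstacle here; the work is bookkeeping. The two steps deserving care are verifying that the composed Lipschitz constant does not grow---this is precisely where $\ell^1$-additivity of the block sums is needed---and checking that the index gap is at least $r(k-1)$. The factor $r$ in the final coefficient appears solely because each block contributes $r$ underlying variables, so the counts $u,v$ in $c(F,G)$ are replaced by $ur,vr$.
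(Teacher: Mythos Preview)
Your proposal is correct and follows essentially the same route as the paper: rewrite $F(Z_{\ii}^r)$ and $G(Z_{\jj}^r)$ as functions $\widetilde F,\widetilde G$ of the underlying $X$-variables via the block-sum map, then invoke the $\eta$-weak dependence of $(X_n)$ on the resulting $ur$- and $vr$-tuples. Your write-up is in fact more explicit than the paper's on the two points that matter---that the $\ell^1$-Lipschitz constant is preserved under composition with the sum map, and that the $X$-level gap is at least $r(k-1)+1\geq r(k-1)$---both of which the paper leaves to the reader.
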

\begin{proof}
 Let $f$ and $g$ be two Lipschitz functions. Assume $(\ii\/,\jj)$ are multi-indices such that 
$$i_1<\cdots < i_u\leq i_u+k\leq j_1<\cdots < j_v \/.$$ 
\begin{eqnarray*}
Then, 
\lefteqn{\cov(f(Z_{i_1}^r\/, \ldots \/,Z_{i_u}^r)\/, g( Z_{j_1}^r\/, \ldots \/,Z_{j_v}^r))}\\
&=& \cov\left(\widetilde{f}(X_{i_1r+1}\/, \ldots \/, X_{i_1(r+1)}\/, X_{i_2r+1}\/, \ldots \/, X_{(i_u+1)r})\right. \/, \\
&& \left. \widetilde{g}(X_{j_1r+1}\/, \ldots \/, X_{j_1(r+1)}\/, X_{j_2r+1}\/, \ldots \/, X_{(j_v+1)r})\right)\\
&\leq& r \eps((k-1)r) \left(u\mbox{lip}f \Vert g\Vert_\infty +v \Vert f \Vert_\infty \mbox{lip} f \right) \/,
\end{eqnarray*}
where $\widetilde{\varphi}(x_1\/, \ldots \/, x_r\/, \ldots \/, x_{rk}) = \displaystyle\varphi\left(\sum_{i=1}^{r}x_i\/, \ldots \/, \sum_{i=1}^r x_{r(k-1)+i}\right)$.
\end{proof}
As a corollary to Lemma \ref{weak_dep_Z}, we deduce that for any $r$, $\widehat{w}_r$ exists eventually almost surely. 
\begin{coro}
Assume that the hypotheses of Property \ref{exist} are satisfied and that the sequence $(X_i)_{i\in\N}$ is $\eta$-weakly dependent.  Then, for any $r$, $\widehat{w}_r$ exists eventually almost surely. 
\end{coro}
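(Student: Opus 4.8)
The plan is to copy the argument of Proposition \ref{exist_w^i} almost verbatim, replacing the sequence $(X_i)_{i\in\N}$ by the block sequence $(Z_i^r)_{i\in\N}$ for the \emph{fixed} value of $r$ and letting $k\to\infty$. The point is that $\widehat{w}_r$ is nothing but the empirical adjustment coefficient built from the ``claims'' $Z_i^r$: the positive root of $\frac1r\log\hat{M^r_k}(t)=0$ coincides with the positive root of $\hat{M^r_k}(t)=1$, and $\hat{M^r_k}(t)=\frac1k\sum_{i=0}^{k-1}e^{tZ_i^r}$ is exactly the empirical moment generating function of the sample $Z_0^r,\dots,Z_{k-1}^r$. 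As in Section \ref{sec_exist}, this empirical function is convex, equals $1$ at $t=0$, is finite for every $t$ (a finite sum of exponentials, so Mammisch's third condition is automatic with $a=\infty$), and has derivative at $0$ equal to $\frac1k\sum_{i=0}^{k-1}Z_i^r$. Hence $\widehat{w}_r$ exists and is unique as a positive solution if and only if $\frac1k\sum_{i=0}^{k-1}Z_i^r<0$ and at least one of the $Z_i^r$ is strictly positive. It then remains to show these two finite-sample conditions hold eventually almost surely as $k\to\infty$.

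For this I would first record that, $r$ being fixed, $(Z_i^r)_{i\in\N}$ is stationary: the blocks $Z_i^r=\sum_{j=1}^r X_{j+ir}$ run over pairwise disjoint windows of length $r$, so $(Z_i^r)_i$ is a fixed measurable map applied along the disjoint translates of the stationary sequence $(X_i)_i$. By Lemma \ref{weak_dep_Z} the sequence $(Z_i^r)_i$ is moreover $\eta$-weakly dependent, and, as recalled in the proof of Proposition \ref{exist_w^i}, this forces ergodicity (see \cite{DD}). The ergodic theorem then yields, almost surely,
$$\frac1k\sum_{i=0}^{k-1}Z_i^r \;\longrightarrow\; \E(Z_0^r)=r\,\E(X_1)<0,$$
the strict negativity coming from condition (E). Thus $\frac1k\sum_{i=0}^{k-1}Z_i^r<0$ holds for all large $k$, almost surely, which is the first required condition.

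For the second condition I would apply the ergodic theorem to the indicators, obtaining almost surely
$$\frac1k\sum_{i=0}^{k-1}\mathbf{1}_{\{Z_i^r>0\}}\;\longrightarrow\;\P(Z_0^r>0);$$
provided this limit is strictly positive, the number of positive blocks among $Z_0^r,\dots,Z_{k-1}^r$ tends to infinity, so in particular $\{i\le k-1:\ Z_i^r>0\}$ is eventually non-empty, and combined with the previous paragraph this delivers existence of $\widehat{w}_r$ eventually almost surely. The one genuinely non-routine step, which I expect to be the crux, is therefore guaranteeing $\P(Z_0^r>0)>0$ for the fixed $r$: by stationarity $Z_0^r=\sum_{j=1}^r X_j$ has the same law as a block of $r$ consecutive increments, i.e.\ (up to an index shift) as $Y_{r-1}$, so this is precisely hypothesis (2) of Property \ref{exist}. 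I would stress that this positivity cannot be deduced from $\P(X_1>0)>0$ alone---dependence may force $X_1>0$ to be compensated by strongly negative neighbours, making a block sum non-positive almost surely---which is exactly why hypothesis (2) of Property \ref{exist} is imposed and is the assumption doing the real work here.
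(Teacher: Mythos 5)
Your proof is correct and takes essentially the same route as the paper, whose entire proof reads ``This is a direct consequence of Proposition \ref{exist_w^i} and Lemma \ref{weak_dep_Z}'': you simply flesh out that one line by rerunning the ergodic-theorem argument of Proposition \ref{exist_w^i} on the block sequence $(Z_i^r)_{i\in\N}$, whose $\eta$-weak dependence (hence ergodicity) is supplied by Lemma \ref{weak_dep_Z}. Your closing observation that $\P(Z_0^r>0)>0$ rests on hypothesis (2) of Property \ref{exist} rather than on $\P(X_1>0)>0$ alone is a genuine subtlety the paper glosses over (indeed, since that hypothesis is stated only for large enough $n$, the ``for any $r$'' in the corollary is slightly generous in both your write-up and the paper's).
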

\begin{proof}
This is a direct consequence of Proposition \ref{exist_w^i} and Lemma \ref{weak_dep_Z}. 
\end{proof}
\begin{proof}[Proof of Theorem \ref{TCL3}]
From Lemma \ref{weak_dep_Z} and Theorem \ref{TCL2}, we get 
$$\sqrt{k}\left(\widehat{M}_k^r(t) - \E(e^{tZ_1^r})\right) \stackrel{\LL}{\longrightarrow} \NN(0\/, \Gamma^2_r(t))$$
with 
$$\Gamma^2_r(t) = \sum_{n\geq 0} \cov(e^{tZ_0^r}\/, e^{tZ_n^r}) \/$$
and if we denote by $w_r$ the $i$-adjustment coefficient of the sequence $Z_0^r$ and $\widehat{w_r}$ the positive solution to $\widehat{M}_k^r(t)=1$,
$$\sqrt{k}(\widehat{w_r}-w_r) \stackrel{\LL}{\longrightarrow} \NN(0\/, \Gamma^2_r)\/$$
with 
$$\Gamma^2_r = \frac{\Gamma^2_r(w_r)}{\E(Z_1^r e^{w_rZ_1^r})^2} \/.$$
This implies that $\widehat{w_r}$ goes to $w_r$ in probability, as $k$ goes to infinity.  We conclude the proof of Theorem \ref{TCL3} by using Corollary \ref{conv_adjust}~: there exists a sequence $r(k)\stackrel{k\rightarrow \infty}\longrightarrow \infty$ such that $\widehat{w}_r$ converges to $w^d$ in probability. 
\end{proof}
Theorem \ref{TCL3} is interesting from a theoretical point of view but it is not so useful from a practical point of view. Indeed, it proves that $\widehat{w}_r$ converges to $w^d$ for a sequence $r=r(k)$ but we have no information on how to choose $r$ with respect to $k$. Moreover, provided that $\frac{\Gamma^2(w_r)}{\E(Z_1^re^{w_rZ_1^r})^2}$ is converging, we could obtain a central limit theorem for $\sqrt{k}(\widehat{w}_r-w^d)$ with limit variance $\Gamma^2_d  =\displaystyle \lim_{r\rightarrow \infty} \frac{\Gamma^2(w_r)}{\E(Z_1^re^{w_rZ_1^r})^2}$. This expression of the asymptotic variance in the central limit theorem is not useful from a practical point of view. We might nevertheless use moment and Bienaimé-Tchebitchev inequalities in order to get a useful relationship between $r$ and $k$. \\ 
 We apply the following  inequality on the order $2$ moment.   
\begin{prop} \label{expo1}
Let $(W_i)_{i\in\N}$ be a sequence of centered random variables and  
$$C_{j\/,2} = \sup_{\stackrel{t_1\/, \ t_2}{ t_2-t_1=j}}\cov(W_{t_1}\/,W_{t_2}) \/.$$
$$\displaystyle \E(S_n^2) \leq 2n \sum_{j=0}^{n-1}C_{j\/,2} \/,$$
where $\displaystyle S_n=\sum_{i=1}^n W_i$.\\
As a consequence, if $W_i$ is $\eta$-dependent and stationary, with mixing coefficient $\eps(k)\leq C\theta^k$, we have that 
$$\E(S_n^2)\leq 16nM_m^{\frac{2}m}C^{\frac{m-2}m}\frac1{1-\theta^{\frac{m-2}m}}\/$$
where $M_m = \E(|W_i|^m)$.
\end{prop}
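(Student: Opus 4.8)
\emph{Proof plan.} The statement splits into two parts, and the plan is to treat them separately, since the first is purely algebraic while the second requires the dependence structure. For the first inequality I would expand the square and organize the double sum by lag. As the $W_i$ are centered, $\E(S_n^2)=\sum_{i,j=1}^n\cov(W_i,W_j)$. Setting $\ell=j-i$ and using the symmetry $\cov(W_{t_1},W_{t_2})=\cov(W_{t_2},W_{t_1})$ to reduce to $\ell\geq 0$, each individual covariance satisfies $\cov(W_i,W_{i+\ell})\leq C_{|\ell|,2}$ by the very definition of $C_{|\ell|,2}$ as a supremum. Since the number of admissible pairs with a given lag $|\ell|$ is $n-|\ell|\leq n$, I obtain
$$\E(S_n^2)\leq n\,C_{0,2}+2n\sum_{\ell=1}^{n-1}C_{\ell,2}\leq 2n\sum_{j=0}^{n-1}C_{j,2},$$
the last step using $C_{0,2}\geq 0$. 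This needs no dependence assumption.

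For the second inequality I must control $C_{j,2}$ for the $\eta$-weakly dependent stationary sequence, and the key is a covariance inequality relating $\cov(W_0,W_j)$ to $\eps(j)$ and the moment $M_m$. The natural route is truncation: for a level $M>0$ set $\psi_M(x)=(x\wedge M)\vee(-M)$, which is $1$-Lipschitz and bounded by $M$. Applying the $\eta$-weak dependence of Definition \ref{def_weak_dep} to $f=g=\psi_M$ (here $u=v=1$, so that $c(f,g)=2M$) gives $|\cov(\psi_M(W_0),\psi_M(W_j))|\leq 2M\eps(j)$. The replacement error is handled through the decomposition
$$\cov(W_0,W_j)=\cov(\psi_M(W_0),\psi_M(W_j))+\cov(W_0-\psi_M(W_0),W_j)+\cov(\psi_M(W_0),W_j-\psi_M(W_j)),$$
and each remainder is estimated by H\"older's and Markov's inequalities: since $|W-\psi_M(W)|\leq |W|\,\mathbf{1}_{\{|W|>M\}}$, the tail contributions are bounded by negative powers of $M$ times powers of $M_m$.

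Optimizing the resulting bound $|\cov(W_0,W_j)|\leq 2M\eps(j)+(\text{moment tail in }M)$ over the truncation level $M$ then yields the covariance inequality $C_{j,2}\leq 8\,M_m^{2/m}\,\eps(j)^{(m-2)/m}$, which is where both the exponent $1-2/m=(m-2)/m$ and the constant $8$ originate. I expect this optimization to be the main obstacle: balancing the Lipschitz term, which is only \emph{linear} in $M$, against the moment tail while tracking the exact constant and the precise power $M_m^{2/m}$ is delicate, so it is cleanest to invoke the weak-dependence covariance inequality already established in \cite{book,DL} rather than redo the bookkeeping. Once it is in hand, I insert $\eps(j)\leq C\theta^j$, so that $\eps(j)^{(m-2)/m}\leq C^{(m-2)/m}\theta^{\,j(m-2)/m}$, and sum the geometric series:
$$\sum_{j=0}^{n-1}C_{j,2}\leq 8\,M_m^{2/m}C^{(m-2)/m}\sum_{j\geq 0}\theta^{\,j(m-2)/m}=\frac{8\,M_m^{2/m}C^{(m-2)/m}}{1-\theta^{(m-2)/m}}.$$
Combining this with the first inequality $\E(S_n^2)\leq 2n\sum_{j=0}^{n-1}C_{j,2}$ produces the announced bound $\E(S_n^2)\leq 16\,n\,M_m^{2/m}C^{(m-2)/m}\big/\big(1-\theta^{(m-2)/m}\big)$.
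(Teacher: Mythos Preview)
Your approach is the paper's: the lag expansion for the first inequality (the paper simply cites Lemma~4.6 of \cite{book}) and truncation plus optimization for the second. The only substantive difference is in the bookkeeping you flagged as delicate. The paper uses a four-term decomposition (truncated/tail in each slot separately), obtaining the tail contribution $2M\Vert W_t-W_t^{(M)}\Vert_1+\Vert(W_t-W_t^{(M)})^2\Vert_1\leq 6M^{2-m}M_m$, and bounds the truncated covariance by $2M^{2}\eps(j)$ rather than your $2M\eps(j)$; then the explicit choice $M=(M_m/\eps(j))^{1/m}$ gives exactly $8M_m^{2/m}\eps(j)^{(m-2)/m}$. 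Your sharper linear bound $2M\eps(j)$, if balanced against $M^{2-m}M_m$, would actually produce the better exponent $(m-2)/(m-1)$ on $\eps(j)$, not $(m-2)/m$; so the stated constant $8$ and exponent come precisely from the paper's cruder $M^{2}$ estimate. In short, your outline is correct, but to recover the exact inequality of the statement you must either coarsen your truncated-covariance bound to $2M^2\eps(j)$ or accept a different (stronger) final inequality.
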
 
\begin{proof}
The first part of the proof of Proposition \ref{expo1} may be found in \cite{book} (see Lemma 4.6 p.79). For the second part, we proceed as in the proof of Lemma \ref{heredit_eta} (see also \cite{DN})~: let $W_t^{(M)} = \max(\min(W_t\/,M)\/, -M)$, (so that $W_t^{(M)} = W_t$ provided that $|W_t|\leq M$). Then,
\begin{eqnarray*}
\cov(W_t\/,W_{t+j})&\leq &\cov(W_t^{(M)}\/,W_{t+j}^{(M)}) + \cov(W_t^{(M)}\/,(W_{t+j}-W_{t+j}^{(M)}))+ \\
\lefteqn{\cov(W_{t+j}^{(M)}\/,W_j-W_j^{(M)}) + \cov((W_{t+j}-W_{t+j}^{(M)})\/,W_j-W_j^{(M)}) }\\
&\leq&2 M^2\eps(j) + 2M \Vert W_t-W_t^{(M)}\Vert_1 +\Vert ( W_t-W_t^{(M)})^2\Vert_1\\
&\leq &2 M^2\eps(j) + 6M^{2-m}M_m\/,
\end{eqnarray*}
where the last line is obtained by noting that
\begin{eqnarray*}
\Vert W_t-W_t^{(M)}\Vert_1&=&\int \Id_{|W_t|>M}|W_t-W_t^{(M)}|d\P\\
	&\leq & 2 \int \Id_{|W_t|>M}|W_t|d\P \\
	&\leq & 2 M^{1-m}M_m\/.
\end{eqnarray*}
Similarly, we obtain 
$$\Vert ( W_t-W_t^{(M)})^2\Vert_1\leq 4M^{2-m}M_m \/.$$
We conclude by choosing $M= \displaystyle \left(\frac{M_m}{\eps(j)}\right)^{\frac1m}$.
\end{proof}
\begin{prop}\label{expoM}
Assume that $(X_i)_{i\in\N}$ is an $\eta$ weakly-dependent process, with mixing coefficient $\eps(k) \leq C\theta^k$, $C>0$, $0<\theta<1$. Then, for any  $t\in[0\/,u_0[$, such that $3t\leq u_0$, for any $v>0$, we have:
$$\P\left(|\widehat{M_k^r}(t) - \E(e^{tY_r})|>v \right) \leq \frac{4(Cr+3)\E(e^{3tY_r})^{2/3}}{v^2 k (1-\theta^{\frac16})}\/.$$
\end{prop}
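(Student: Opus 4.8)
The plan is to reduce the statement to a second--moment estimate via the Bienaym\'e--Tchebitchev inequality, and then to control that second moment by the covariance--summation technique of Proposition \ref{expo1}. Write $W_i = e^{tZ_i^r} - \E(e^{tZ_i^r})$, so that, by stationarity, $\widehat{M_k^r}(t) - \E(e^{tY_r}) = \frac1k\sum_{i=0}^{k-1}W_i =: \frac1k S_k$, with $(W_i)_{i\in\N}$ centered and stationary. Bienaym\'e--Tchebitchev gives
$$\P\left(|\widehat{M_k^r}(t) - \E(e^{tY_r})| > v\right)\leq\frac{\E(S_k^2)}{k^2v^2},$$
and the first part of Proposition \ref{expo1} yields $\E(S_k^2)\leq 2k\sum_{j=0}^{k-1}C_{j,2}$, where by stationarity $C_{j,2} = \cov(e^{tZ_0^r}, e^{tZ_j^r})$. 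It therefore suffices to prove that $\sum_{j\geq 0}C_{j,2}\leq 2(Cr+3)\E(e^{3tY_r})^{2/3}/(1-\theta^{\frac16})$.

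To bound a single covariance I would adapt the truncation argument used in the proofs of Proposition \ref{expo1} and Lemma \ref{heredit_eta} to the \emph{unbounded} functionals $U_i := e^{tZ_i^r}$. Fix a level $M>0$ and split $U_i = \min(U_i, M) + (U_i - M)_+$. The covariance of the bounded parts $\min(U_0, M)$, $\min(U_j, M)$ is controlled through the $\eta$-weak dependence of the block sequence $(Z_i^r)_{i\in\N}$: by Lemma \ref{weak_dep_Z} its mixing coefficient is $r\eps(r(j-1))\leq Cr\,\theta^{r(j-1)}$, and since $y\mapsto\min(e^{ty},M)$ is bounded by $M$ this term is $O(M^2\,Cr\,\theta^{r(j-1)})$ (the Lipschitz factor of the truncated exponential being absorbed into the constant, exactly as in Lemma \ref{heredit_eta}). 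The three remaining terms involve the tail $(U_i-M)_+$; a Markov--type bound against the third exponential moment gives $\E((U_i-M)_+)\leq\E(e^{3tZ_i^r})/M^2$ and $\E((U_i-M)_+^2)\leq\E(e^{3tZ_i^r})/M$, and by stationarity $\E(e^{3tZ_i^r}) = \E(e^{3tY_r})$; bounding the two mixed covariances by $M\,\E((U-M)_+)$ and the last by $\E((U-M)_+^2)$ (Cauchy--Schwarz) produces a total tail contribution $\leq 3\E(e^{3tY_r})/M$. This is exactly where the hypothesis $3t\leq u_0$ is used, to guarantee $\E(e^{3tY_r})<\infty$; it is also the origin of the additive constant $3$. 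Altogether, for $j\geq1$,
$$C_{j,2}\leq c_1\,M^2\,Cr\,\theta^{r(j-1)} + 3\,\frac{\E(e^{3tY_r})}{M}.$$

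It remains to tune the truncation level to the lag. Choosing $M = M_j = \E(e^{3tY_r})^{1/3}\,\theta^{-(j-1)/6}$ makes the tail term equal to $3\,\E(e^{3tY_r})^{2/3}\theta^{(j-1)/6}$, while the bounded part becomes $c_1\,Cr\,\E(e^{3tY_r})^{2/3}\theta^{(r-1/3)(j-1)}$; since $r\geq 1$ we have $r-\tfrac13\geq\tfrac16$, hence $\theta^{(r-1/3)(j-1)}\leq\theta^{(j-1)/6}$, so both contributions carry the \emph{same} geometric factor $\theta^{(j-1)/6}$, uniformly in $r$. Summing the geometric series $\sum_{j\geq 1}\theta^{(j-1)/6} = (1-\theta^{\frac16})^{-1}$, adding the $j=0$ variance term (bounded by $\E(e^{2tY_r})\leq\E(e^{3tY_r})^{2/3}$ through the power--mean inequality), and collecting constants yields $\sum_{j\geq 0}C_{j,2}\leq 2(Cr+3)\E(e^{3tY_r})^{2/3}/(1-\theta^{\frac16})$, which combined with the first paragraph gives the claim.

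The bookkeeping of the absolute constants is routine; the genuine difficulty is the covariance estimate for the exponential functionals $e^{tZ_i^r}$. Because these are unbounded, the $\eta$-weak dependence of $(Z_i^r)$ cannot be applied to them directly, and one must interpolate between the mixing decay (good for large lags) and the integrability of $e^{3tZ_i^r}$ (good for controlling the truncation error) by tuning $M_j$. The exponent $2/3$ on $\E(e^{3tY_r})$ and the rate $\theta^{\frac16}$ are precisely the output of this trade-off at moment order $m=3$, and the use of $r\geq 1$ to replace the block decay $\theta^{r(\cdot)}$ by $\theta^{(\cdot)/6}$ is what renders the bound uniform in $r$ inside the exponent.
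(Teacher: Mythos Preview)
Your proof is correct and follows essentially the same route as the paper's: Bienaym\'e--Tchebitchev reduces the statement to a second moment, Proposition \ref{expo1} reduces that to a covariance sum, and each covariance $\cov(e^{tZ_0^r},e^{tZ_j^r})$ is split via truncation into an $\eta$-dependence part (controlled through Lemma \ref{weak_dep_Z} at the block level) and a tail part (controlled by the third exponential moment $\E(e^{3tY_r})$), with the truncation level optimized per lag at $m=3$. The only cosmetic differences are your separate handling of the $j=0$ term and your geometric rate $\theta^{(j-1)/6}$, whereas the paper uses the cruder bound $r(j-1)+1\geq j/2$ to get $\theta^{j/6}$ directly; both yield the same final constant up to the routine bookkeeping you mention.
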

\begin{proof}
We apply the Bienaimé-Tchebitchev inequality and Proposition \ref{expo1} with $m=3$. Let 
$$W_\ell(t) = e^{t\sum_{i=1}^r X_{i+r\ell}} \/.$$
Following the lines of the proof of Proposition \ref{expo1}, we have that the covariance coefficients associated to $(W_\ell(t))_{\ell \in\N}$ are:
$$C_{2\/,j} \leq M_m^{\frac2m}\theta^{\frac{j}2\frac{m-2}m}(2Cr+6) \/,$$
with $M_m=\E(e^{tmY_r})$. We have used the fact that
$$\cov(W_\ell^{(M)}(t)\/,W_{\ell+j}^{(M)}(t))\leq 2M_m^2r\eps(r(j-1)+1)\leq 2M_m^2rC \theta^{\frac{j}2}\/.$$
We choose $m=3$ and apply Proposition \ref{expo1}, so that 
$$\E([k\widehat{M_k^r}(t) - k\E(e^{tY_r})]^2)\leq 2k(2Cr+6)[\E(e^{3tY_r})]^\frac23\frac1{1-\theta^\frac16}\/.$$
We conclude by using the  Bienaimé-Tchebitchev inequality.
\end{proof} 
This proposition  shows that in order to get the consistency for $\widehat{w}_r$ we should choose  $r(k) = o(\ln k)$. 
\begin{theo} \label{as_cv}
Assume that  $(X_i)_{i\in\N}$ is an $\eta$ weakly-dependent process, with mixing coefficient $\eps(k) = O(\theta^k)$, $0<\theta<1$ and that $3w^d<u_0$. Then, for $r=r(k)= o(\ln k)$, $\widehat{w}_r$ goes to $w^d$ in probability. 
\end{theo}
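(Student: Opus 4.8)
The plan is to split the error as $\widehat{w}_r - w^d = (\widehat{w}_r - w_r) + (w_r - w^d)$ and to handle the two pieces by different means: the deterministic piece $w_r - w^d$ by Corollary \ref{conv_adjust}, and the random piece $\widehat{w}_r - w_r$ by the concentration estimate of Proposition \ref{expoM}. Since the rate $r = r(k) = o(\ln k)$ is understood to satisfy $r(k) \to \infty$, Corollary \ref{conv_adjust} gives $w_r \to w^d$, where $w_r$ is the positive root of $M^r(t) := \E(e^{tY_r}) = 1$ and $\widehat{w}_r$ the positive root of $\widehat{M}_k^r(t) = 1$. Thus for a fixed small $\eps>0$ and $k$ large one has $|w_r - w^d| < \eps/2$, so that $\P(|\widehat{w}_r - w^d| > \eps) \le \P(|\widehat{w}_r - w_r| > \eps/2) + o(1)$, and it suffices to prove $\widehat{w}_r - w_r \to 0$ in probability. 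Throughout I take $\eps$ small enough that $3(w^d + \eps) < u_0$, which is possible because $3w^d < u_0$; establishing convergence for such small $\eps$ is enough.

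The key is to use convexity to reduce the estimator gap to a deviation of $\widehat{M}_k^r$ at a fixed argument. Both $t \mapsto M^r(t)$ and $t \mapsto \widehat{M}_k^r(t)$ are convex, equal $1$ at $t=0$, and have negative slope at $0$ (equal to $\E(Y_r) = r\E(X_1) < 0$, respectively to the empirical mean $\frac1k\sum_i Z_i^r$, which is negative with probability tending to $1$). Hence, on the event that $\widehat{w}_r$ exists — which, arguing as in Proposition \ref{exist_w^i} and its corollary, has probability tending to $1$ — one has
\[
\{\widehat{w}_r > w_r + \eps/2\} = \{\widehat{M}_k^r(w_r + \eps/2) < 1\}, \qquad \{\widehat{w}_r < w_r - \eps/2\} = \{\widehat{M}_k^r(w_r - \eps/2) > 1\}.
\]
Writing $\delta_r^+ = M^r(w_r + \eps/2) - 1$ and $\delta_r^- = 1 - M^r(w_r - \eps/2)$, each event is contained in $\{|\widehat{M}_k^r(w_r \pm \eps/2) - M^r(w_r \pm \eps/2)| > \delta_r^\pm\}$, to which Proposition \ref{expoM} applies (the admissibility $3(w_r \pm \eps/2) \le u_0$ holds for large $r$ since $w_r \to w^d$ and $3(w^d+\eps)<u_0$).

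It remains to show both bounds tend to $0$. Since $c_r(t) = \frac1r \log M^r(t)$ converges to $c$ uniformly on compact subsets of $[0,u_0[$ and $w_r \to w^d$, we get $c_r(w_r + \eps/2) \to c(w^d + \eps/2) > 0$ and $c_r(w_r - \eps/2) \to c(w^d - \eps/2) < 0$, whence $\delta_r^+ \to \infty$ and $\delta_r^- \to 1$; in particular both are bounded away from $0$ for large $r$. The dangerous factor is the moment $\E(e^{3(w_r \pm \eps/2)Y_r})^{2/3} = \exp(\tfrac{2r}{3}c_r(3(w_r \pm \eps/2)))$, which grows exponentially in $r$ because $c(3(w^d \pm \eps/2)) > 0$. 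This is exactly where the rate enters: $r = o(\ln k)$ forces $\tfrac{2r}{3}c_r(3(w_r \pm \eps/2)) = o(\ln k)$, so this moment is $k^{o(1)}$, subpolynomial in $k$, while $Cr+3 = o(\ln k)$. The Proposition \ref{expoM} bound is then of order $\frac{(Cr+3)\,k^{o(1)}}{(\delta_r^\pm)^2\,k} \to 0$, which completes the argument. The main obstacle is precisely this competition between the exponentially growing moment $\E(e^{3tY_r})^{2/3}$ in the numerator of Proposition \ref{expoM} and the sample size $k$ in the denominator: one must verify that the window $3(w^d+\eps) < u_0$ keeps all relevant exponential moments finite with a controlled growth rate, and that $r = o(\ln k)$ with $r \to \infty$ is exactly the threshold that renders the growing moment subpolynomial in $k$ while still driving $w_r \to w^d$.
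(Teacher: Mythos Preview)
Your argument is correct and reaches the same conclusion as the paper, but the route you take to control $\widehat{w}_r-w_r$ is genuinely different. The paper linearises: it writes a Taylor expansion of $\widehat{M}_k^r$ between $w_r$ and $\widehat{w}_r$, drops the positive second-order term to get
\[
|\widehat{w}_r-w_r|\;\le\;\bigl|\E(e^{w_rZ_1^r})-\widehat{M}_k^r(w_r)\bigr|\,\Bigl[\tfrac{\partial}{\partial t}\widehat{M}_k^r(w_r)\Bigr]^{-1},
\]
and then has to control \emph{two} empirical quantities at the single point $w_r$: the deviation of $\widehat{M}_k^r(w_r)$ and the deviation of its derivative $\tfrac{\partial}{\partial t}\widehat{M}_k^r(w_r)$ from $\alpha_r(w_r)=\E(Z_1^r e^{w_rZ_1^r})$. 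This forces an extra application of the moment inequality to the derivative process and an analysis of $\alpha_r(w_r)$ via $c_r'$. You bypass all of that by exploiting convexity directly: the equivalences $\{\widehat{w}_r\gtrless w_r\pm\eps/2\}=\{\widehat{M}_k^r(w_r\pm\eps/2)\lessgtr 1\}$ reduce everything to a single deviation of $\widehat{M}_k^r$ at two (deterministic, $r$-dependent) arguments, with the gaps $\delta_r^\pm$ read off from $c_r$ rather than from a derivative. Your approach is more elementary and cleaner; the paper's approach, on the other hand, produces the explicit non-asymptotic bound (\ref{expolast}) in terms of $v$, which is potentially useful if one later wants rates or confidence statements rather than mere consistency.
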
 
\begin{proof} 
We have that 
\begin{eqnarray*}
\E(e^{w_rZ_1^r}) - \widehat{M}_k^r(w_r)& =& \widehat{M}_k^r(\widehat{w}_r)-\widehat{M}_k^r(w_r)\\
&=&(\widehat{w}_r-w_r)\frac{\partial \widehat{M}_k^r(w_r)}{\partial t} + \frac{\widehat{w}_r-w_r}2 \int\limits_{\widehat{I}_r} \frac{\partial^2 \widehat{M}_k^r(w)}{\partial t^2} \/ d\/w \/,
\end{eqnarray*}
with $\widehat{I}_r=[\min(\widehat{w}_r\/,w_r)\/, \max(\widehat{w}_r\/,w_r)]$, so that 
\begin{eqnarray*}|\widehat{w}_r - w_r| &=& |\E(e^{w_rZ_1^r}) - \widehat{M}_k^r(w_r)|  \left[ \frac{\partial \widehat{M}_k^r(w_r)}{\partial t}  + \frac12 \int\limits_{\widehat{I}_r} \frac{\partial^2 \widehat{M}_k^r(w)}{\partial t^2} \/ d\/w \right]^{-1} \\
&\leq & |\E(e^{w_rZ_1^r}) - \widehat{M}_k^r(w_r)|  \left[ \frac{\partial \widehat{M}_k^r(w_r)}{\partial t}  \right]^{-1} \ \mbox{remark that} \ \frac{\partial^2 \widehat{M}_k^r(w)}{\partial t^2}>0 \/.
\end{eqnarray*}
For any $L>0$,
\begin{eqnarray*}
\P(|\widehat{w}_r -w_r|>u) &=&\P\left(|\E(e^{w_rZ_1^r}) - \widehat{M}_k^r(w_r)|>u\frac{\partial \widehat{M}_k^r(w_r)}{\partial t} \right)\\
&\leq&\P(|\widehat{M}_k^r(w_r) - \E(e^{w_rZ_1^r})|>u L) + \P\left(\left|\frac{\partial \widehat{M}_k^r(w_r)}{\partial t}\right|\leq L\right)\/.
\end{eqnarray*}
Denote $\alpha_r(t) = \E\left(\frac{\partial \widehat{M}_k^r(t)}{\partial t}\displaystyle\right) = \E(Z_1^re^{tZ_1^r}) = \frac{\partial}{\partial t} \E(e^{tY_r})$ and take $L=\frac{\alpha_r(w_r)}2$. Note that $\alpha_r(w_r)>0$ because of the convexity of $\E(e^{tZ_1^r})$ and the fact that $w_r$ is the unique positive solution to $\E(e^{tZ_1^r})=1$. Hence, 
\begin{eqnarray*}
\P\left(\left|\frac{\partial \widehat{M}_k^r(w_r)}{\partial t}\right|\leq L\right)&\leq & \P\left(\left|\frac{\partial \widehat{M}_k^r(w_r)}{\partial t}-\alpha_r(w_r)\right|>\frac{\alpha_r(w_r)}2\right)\\
&\leq& \frac{16(Cr+3)\E(Y_r^3e^{3w_rY_r})^\frac23}{k\alpha_r(w_r)^2(1-\theta^{\frac16})}\/
\end{eqnarray*}
(we proceed as in the proof of Proposition \ref{expoM}). Finally,
\begin{equation}\label{expolast}
\P(|\widehat{w}_r -w_r|>v)\leq  \frac{16(Cr+3)}{\alpha_r(w_r)^2k(1-\theta^\frac16)}(\E(e^{3w_rY_r})^\frac23\frac1{v^2}+\E(Y_r^3e^{3w_rY_r})^\frac23)\/.
\end{equation}
Remark that $\alpha_r(w_r) = r E(e^{w_rY_r})c_r^\prime(w_r)$. Consider an interval $[u_1\/,u_2]\subset [0\/,u_0[$ such that $w^d\in [u_1\/,u_2]$, $3u_2<u_0$, $c$ is non decreasing on $ [u_1\/,u_2]$, $c(u_1)<0$, $c(u_2)>0$ (this exists because of the convexity of the function $c$). Since $c_r$ converges uniformly to $c$ and   $w_r$ converges to $w^d$,  for $r$ large enough, $w_r \in [u_1\/,u_2]$, and 
$$E(e^{3w_rY_r}) \leq e^{r(\eps + c(3u_2))} \/.$$
We also have that for $r$ large enough,  $\alpha_r(w_r) \geq \displaystyle \frac{1-e^{r(c(u_1)-\eps)}}{u_2-u_1}$. By taking  $r(k)=o(\ln k)$, we have that $\widehat{w}_r$ goes to $w^d$ in probability.
\end{proof}
\section{Simulations}\label{simul}
We conclude by giving some simulation results. We present some models for which the adjustment coefficient is computable - namely MA and AR linear processes with an innovation following an exponential law. We provide also a non linear example. We refer to \cite{CMM} for non linear and computable examples. These examples are also more realistic from an actuarial point of view. \\
Recall that if $\xi_i$ follows an exponential law with parameter $\theta >0$ then for $0\leq t<\theta$,
$$\E(e^{t\xi_i})= \frac{\theta}{\theta-t}\/.$$
Let $\eps_i = \xi_i-c$, with $c\theta >1$.  Then, the independent adjustment coefficient $w^i$ is the positive solution to~:
$$e^{-tc}\frac\theta{\theta-t} = 1 \/.$$
The simulation results are summarized below. The graphs represent the estimator $\log \widehat{m}_k(t)$ and $\frac1r\log \widehat{M}_k^r(t)$ of $\lambda(t)$ and $c_r(t)$ respectively in grey and in black. 
\subsection{Independent case}
We have simulated an independent sample of $\eps_i = \xi_i-c$ of length $10000$ and $\theta=1.2$, $c=1$. We have computed $\widehat{w}^i$ and $\widehat{w}^d$:
\begin{center}
\begin{tabular}{|c|c|c|c|}
\hline
$r=6$&$w^i=w^d=0.38$&$\widehat{w}^i=0.36$&$\widehat{w}^d=0.37$\\
\hline
\end{tabular}\\
\includegraphics[scale=0.4,angle=-90]{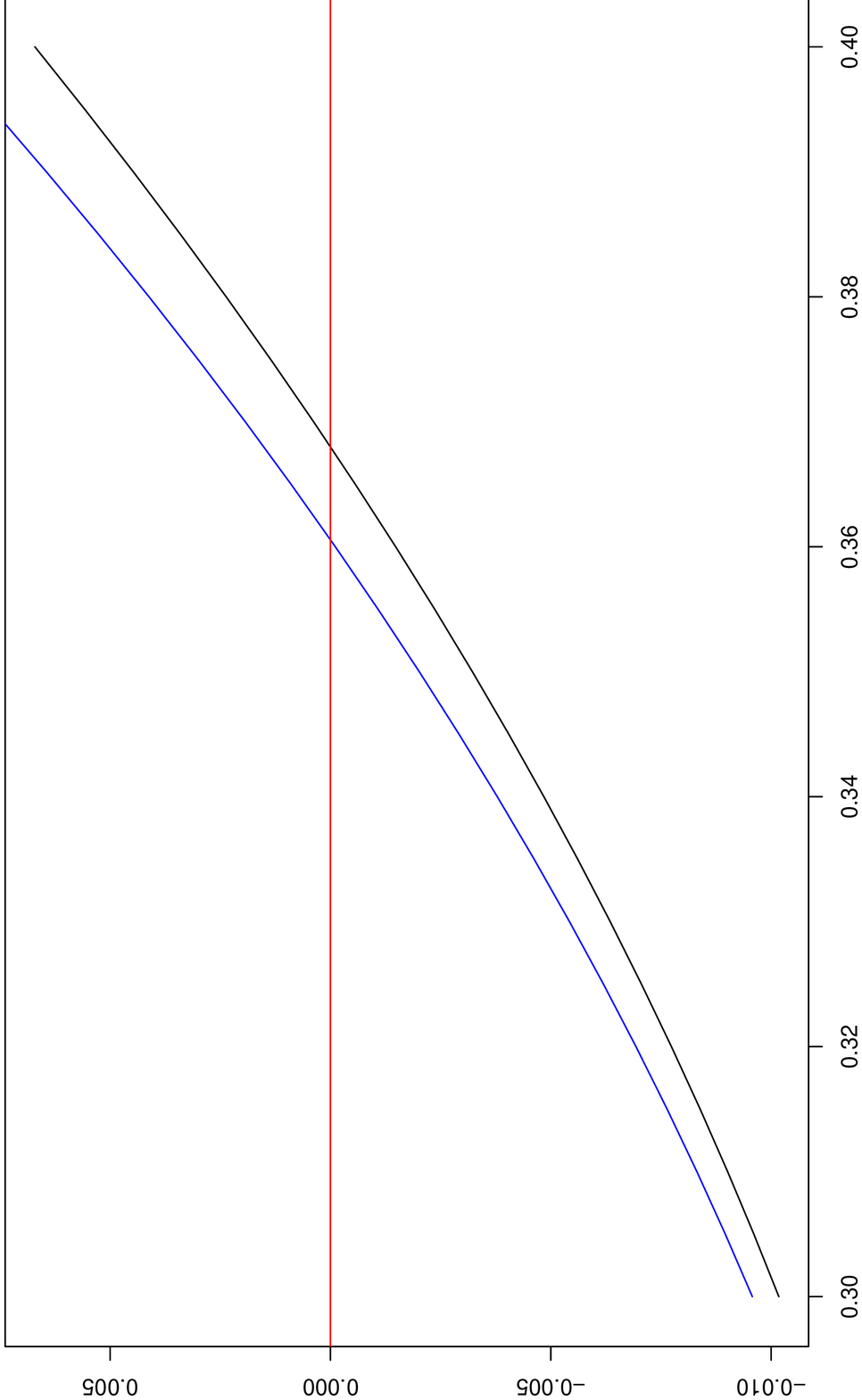}
\end{center}
\subsection{$AR(1)$ model}
We consider the following  $AR(1)$ model: $X_n = aX_{n-1}+\eps_n$. Following \cite{G2}, we have that $w^d = (1-a)w^i$. We have simulated a sample of length $10000$ for $\theta=1.2$, $c=1$, $a=0.3$. Then,
\begin{center}
\begin{tabular}{|c|c|c|c|}
\hline
$r=6$&$\widehat{w}^i=0.45$&$w^d=0.26$&$\widehat{w}^d=0.27$\\
\hline
\end{tabular}\\
\includegraphics[scale=0.4,angle=-90]{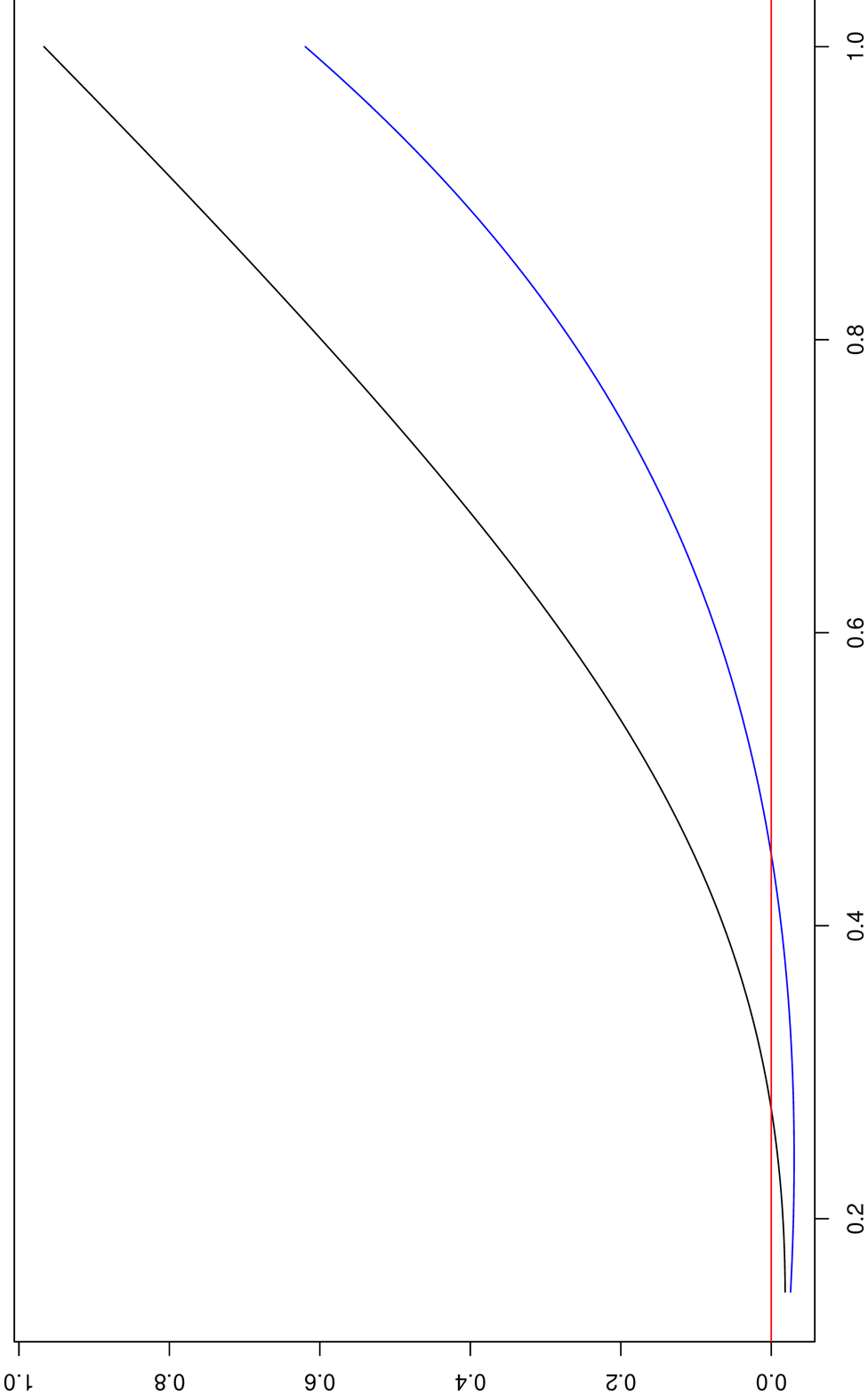}
\end{center}
\subsection{$MA(1)$ model} \label{sec_ma}
We consider the following $MA(1)$ model: $X_n = \eps_n + a\eps_{n-1}$, with $\theta=1.2$, $c=1$, $a=0.2$. Then, $w^d$ is the positive solution to:
$$-tc(1+a)+\ln \theta - \ln(\theta-t(1+a))=0 \/.$$
We have simulated a sample of length $10000$,
\begin{center}\begin{tabular}{|c|c|c|c|c|}
\hline
$r=6$&$\widehat{w}^i=0.47$&$w^d=0.31$&$\widehat{w}^d=0.32$\\
\hline
\end{tabular}\\
\includegraphics[scale=0.4,angle=-90]{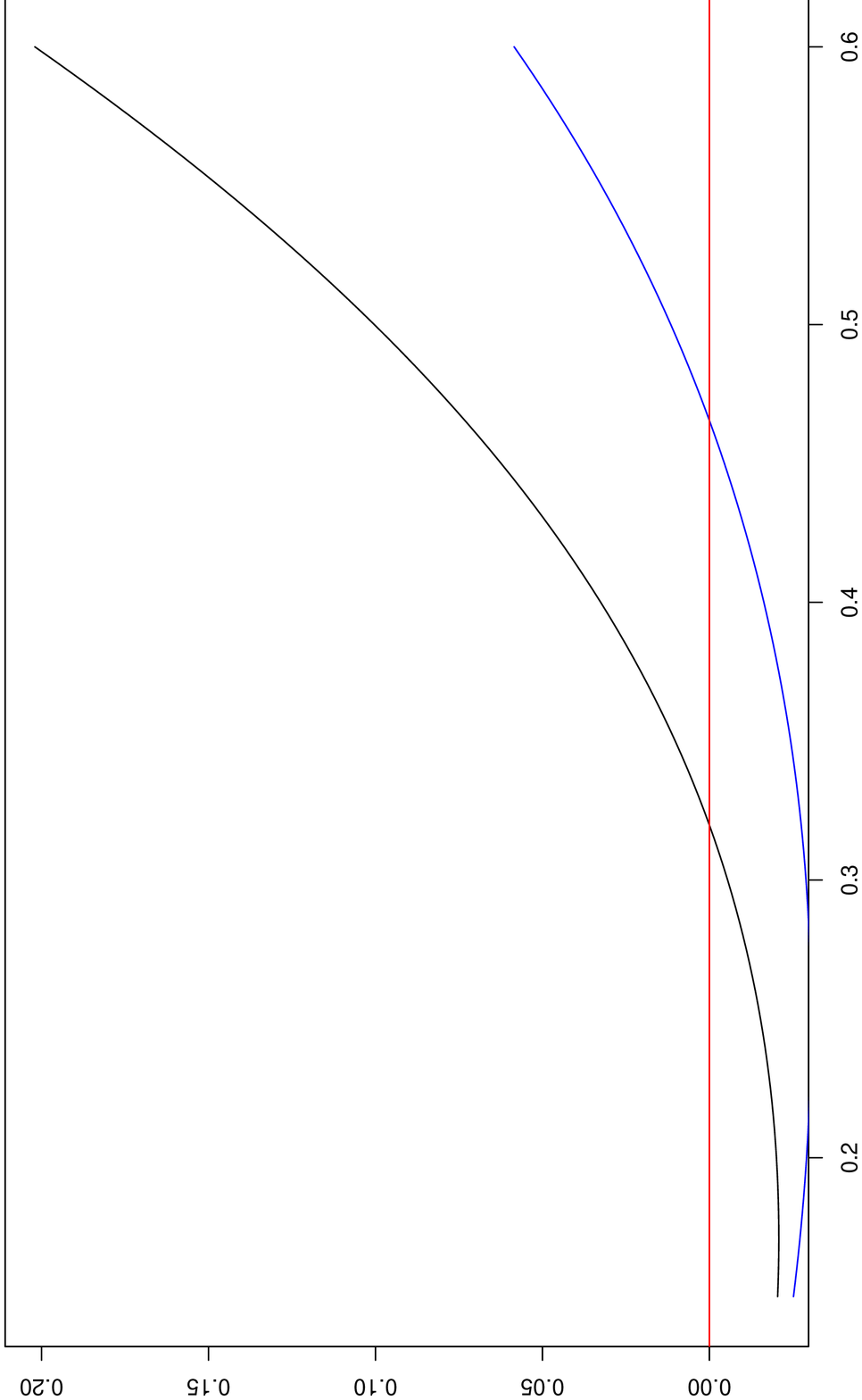}
\end{center}
\subsection{A non linear $AR(1)$ model}\label{nonlin}
We consider the following non linear $AR(1)$ model (which may be seen as a particular case of Bernoulli shifts, see \cite{book}): $X_n = aX_{n-1}^2 + 0.7\eps_n$. We have simulated a sample of length $10000$, with $\theta=1.2$, $c=1$, $a=-0.2$.
\begin{center}\begin{tabular}{|c|c|c|c|}
\hline
$r=6$&$\widehat{w}^i=0.8$&$\widehat{w}^d=1.21$\\
\hline
\end{tabular}\\
\includegraphics[scale=0.4,angle=-90]{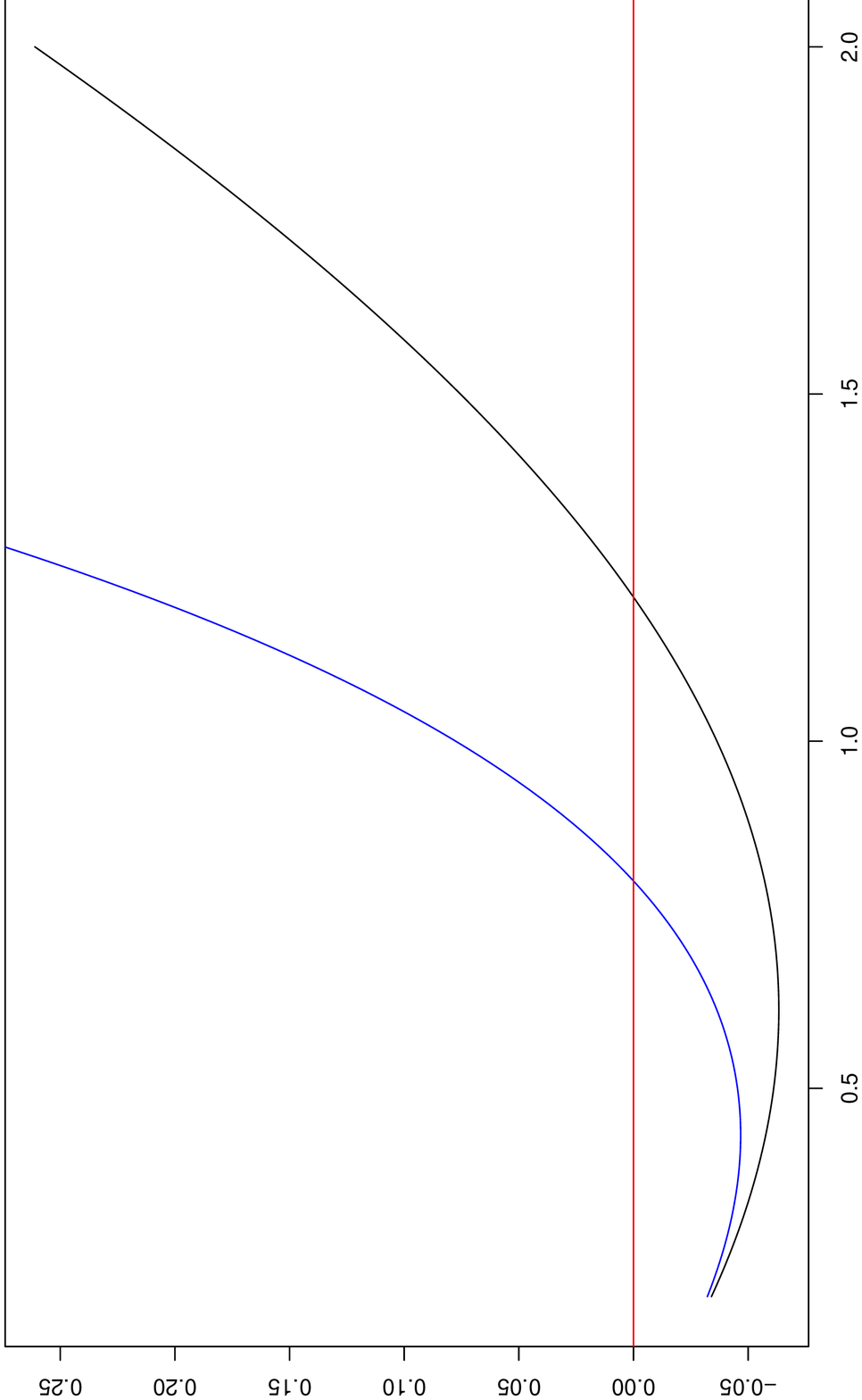}
\end{center}
\subsection{How to choose the $r$-parameter~?} When performing the estimation of the $w^d$ coefficient, we are faced with the choice of the parameter $r$. Following Theorem \ref{as_cv}, we should take $r=o(\ln(k))$ but the practical choice of $r$ for $n$ given is not clear. We have performed several simulations for the independent, $MA(1)$, $AR(1)$, non linear $AR(1)$ models, for several values of $r$. These experiments tend to show that when $r$ increases, the estimator $\widehat{w}^d$ behaves monotonically in the beginning and then has a more chaotic behavior. We propose to choose $r$ as the largest integer for which $\widehat{w}^d$ is monotonic on $[0\/,r]$. This is illustrated in the graphs below for several models.
\subsubsection{Independent case} We have simulated an independent sample of $\eps_i = \xi_i-c$ of length $10000$ and $\theta=1.2$, $c=1$. Below is represented $\widehat{w}^d$ for $r=1\/, \ldots \/, 35$, $w^i=w^d=0.38$. 
\begin{center}
\includegraphics[scale=0.4,angle=-90]{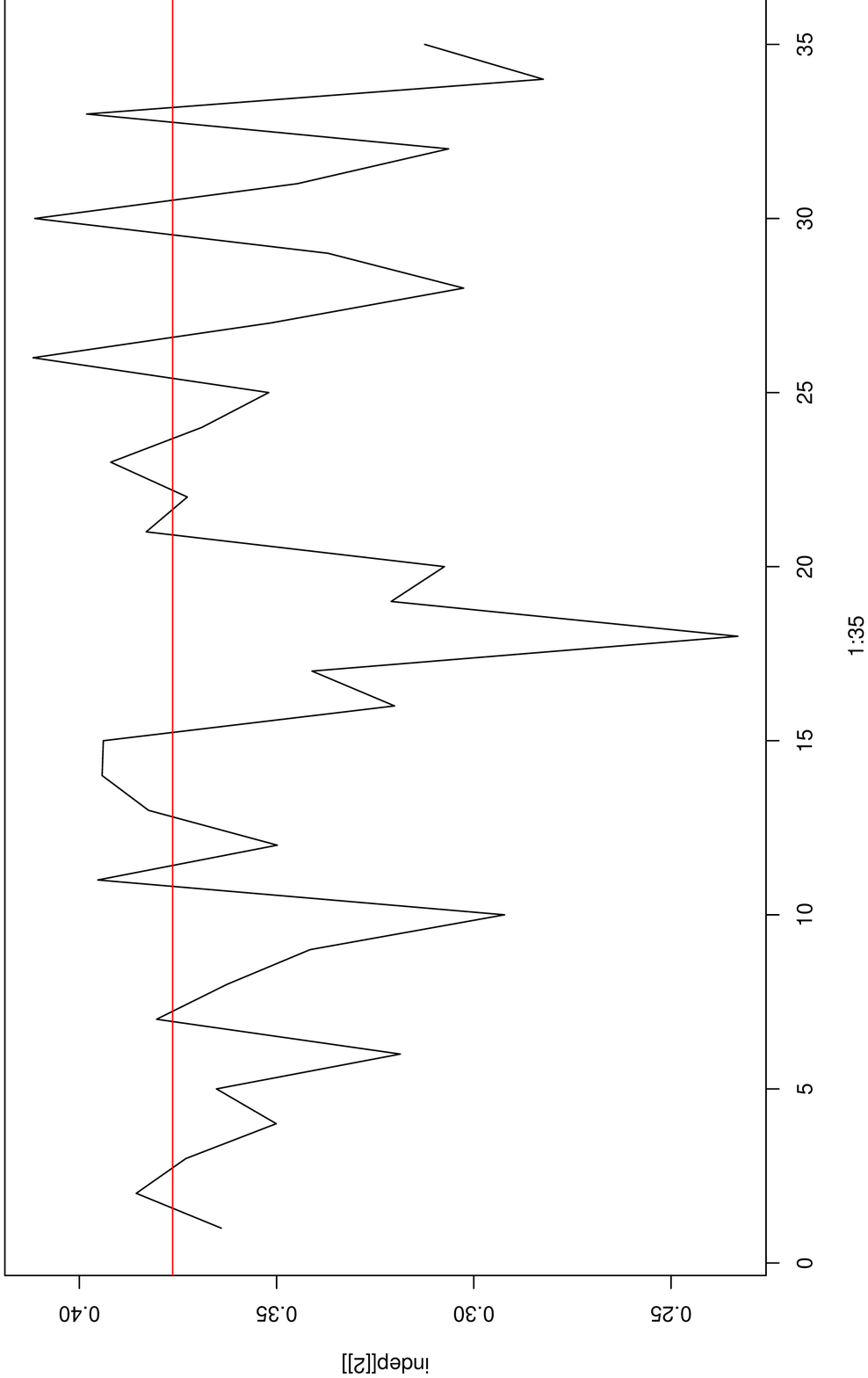}
\end{center}

\subsubsection{Linear $MA(1)$} We consider the following $MA(1)$ model: $X_n = \eps_n + a\eps_{n-1}$, with $\theta=1.2$, $c=1$, $a=0.3$. We have simulated a sample of size $10000$ and represented below $\widehat{w}^d$ for $r=1\/, \ldots \/, 40$, $w^d=0.26$. 
\begin{center}
\includegraphics[scale=0.4,angle=-90]{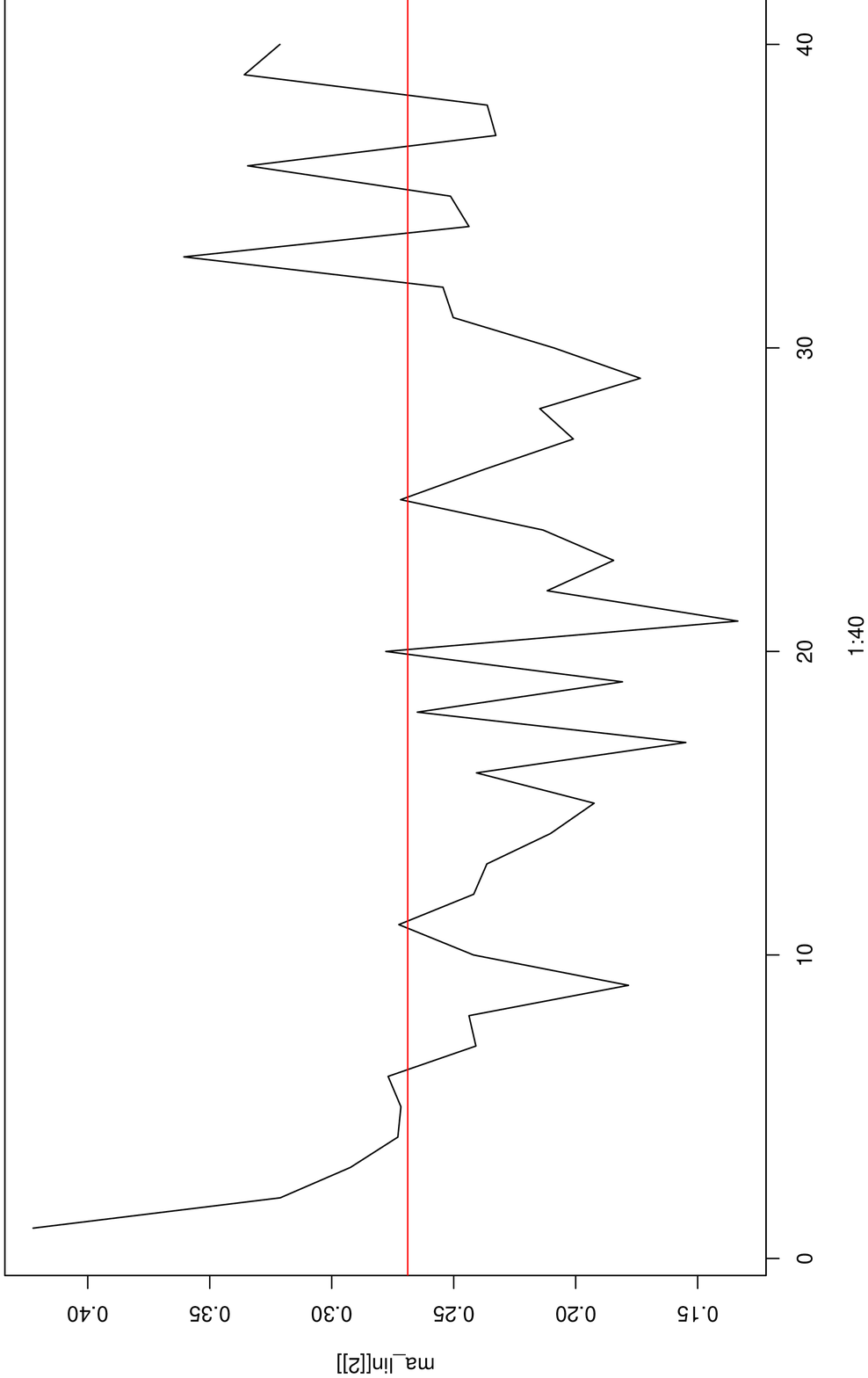}
\end{center}
\subsubsection{Linear $AR(1)$} We consider the following  $AR(1)$ model: $X_n = aX_{n-1}+\eps_n$. We have simulated a sample of length $10000$ for $\theta=1.2$, $c=1$, $a=0.4$, $w^d=$.
\begin{center}
\includegraphics[scale=0.4,angle=-90]{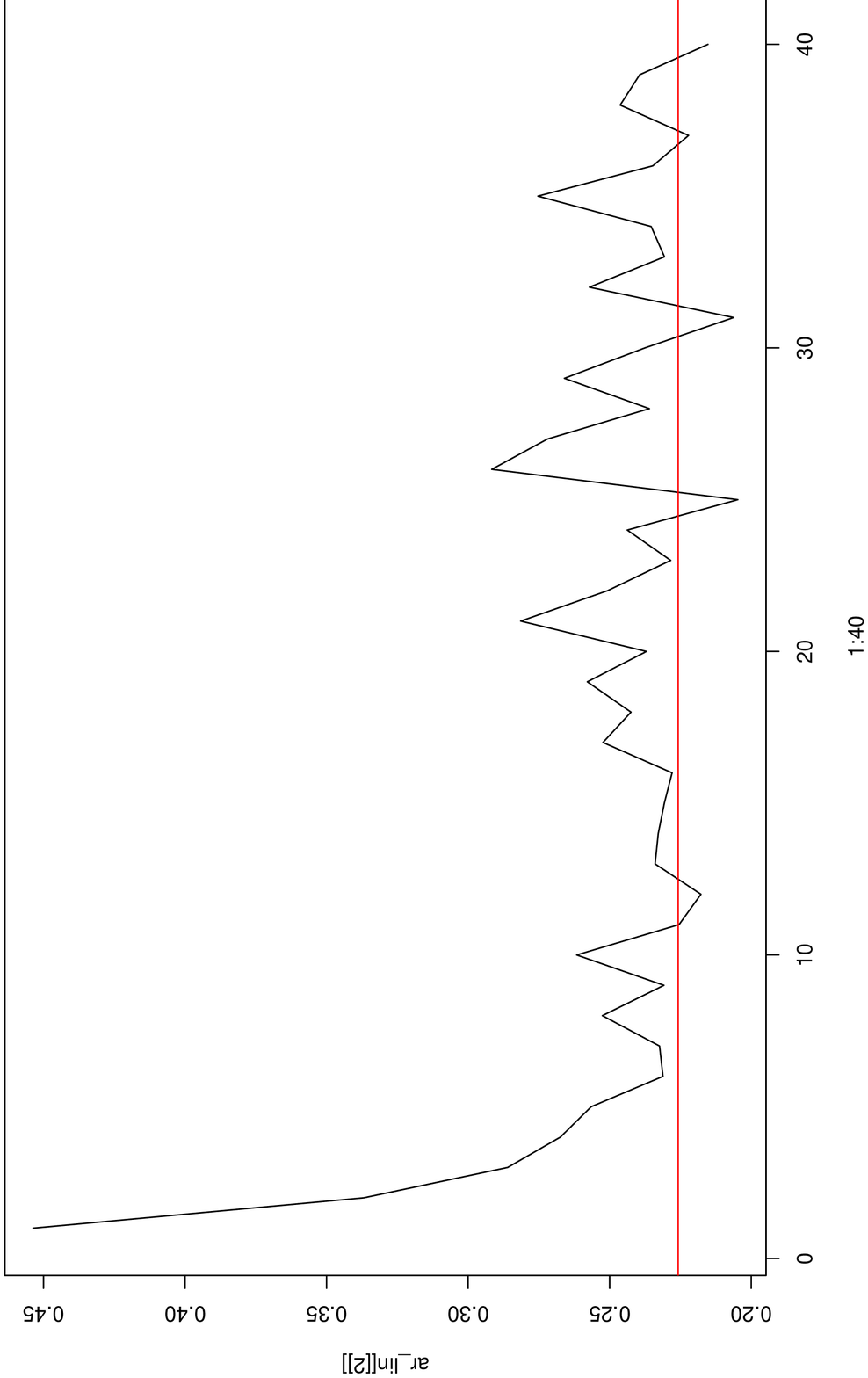}
\end{center}
\subsection{Non linear $AR(1)$} We have represented below $\widehat{w}^d$ for $r=1\/, \ldots\/, 35$ for the non linear $AR(1)$ model of section \ref{nonlin}. 
\begin{center}
\includegraphics[scale=0.4,angle=-90]{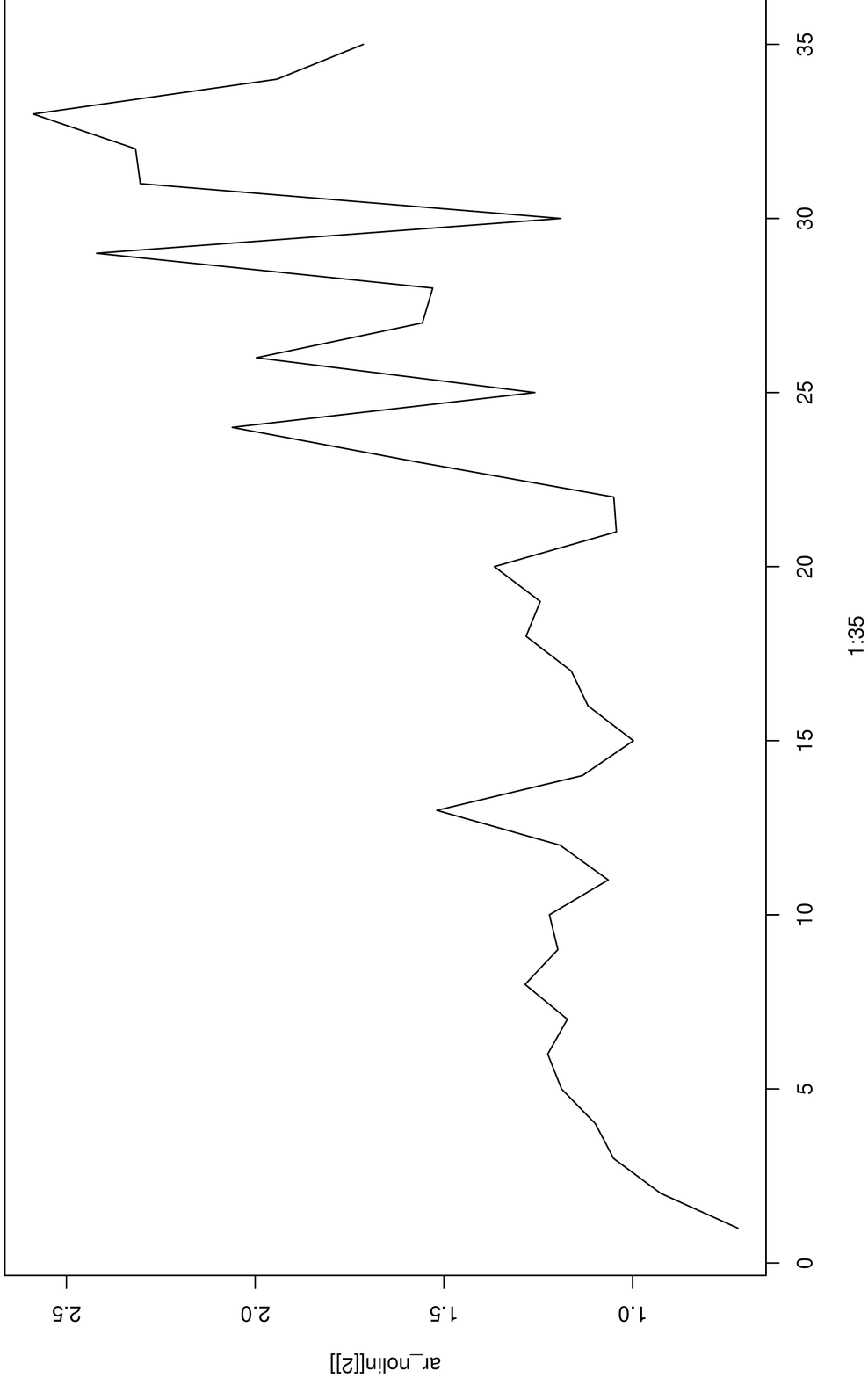}
\end{center}

\subsection{On the empirical distribution of $\widehat{w}^d$.}
We conclude this simulation section with a short study of the empirical distribution of $\widehat{w}^d$. We have performed $100$ simulations of a sample of size $10000$ in the $MA(1)$ model (Section \ref{sec_ma}). The mean value of $\widehat{w}^d$ is $0.317 $, with standard deviation $0.04 $. The computed value of $w^d$ is $0.314$. The histogram and a Shapiro test indicate that the distribution of $\widehat{w}^d$ is probably asymptotically normal.\ \\
{\tt  \ \ \	Shapiro-Wilk normality test\\
\ \
W = 0.9871, p-value = 0.4462
} 
\begin{center}
\includegraphics[scale=0.4,angle=-90]{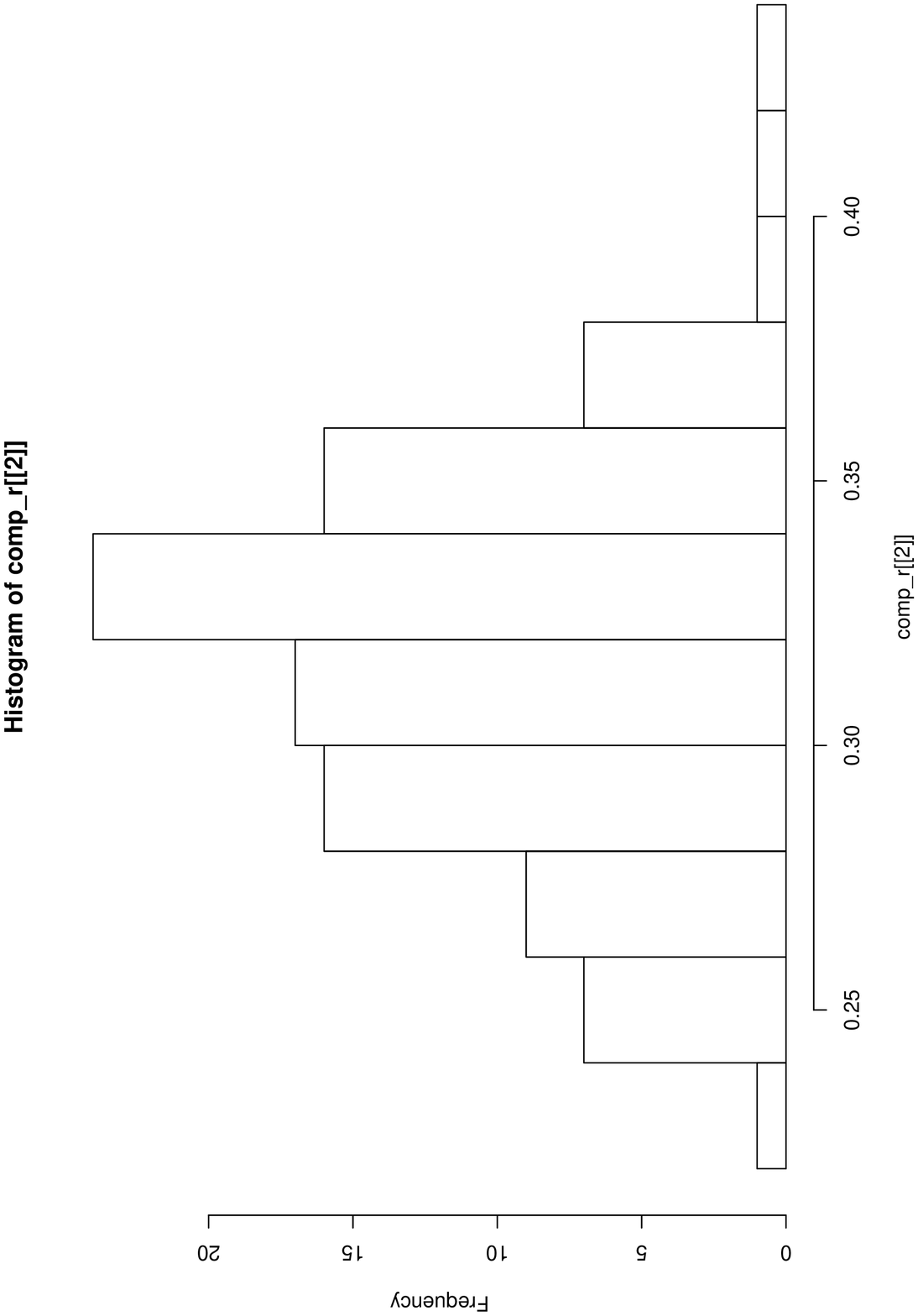}
\end{center}


\end{document}